\newcolumntype{C}[1]{>{\centering\let\newline\\\arraybackslash\hspace{0pt}}m{#1}}
\newcommand{\X}{X} 
\newcommand{\defeq}{\stackrel{\text{\tiny def}}{=}}
\newcommand{\rankVar}{k} 
\newcommand{\dimVar}{n}
\newcommand{\dimVarX}{\dimVar \times \dimVar}
\renewcommand{\P}{P}
\newcommand{\R}{\mathbb{R}}
\DeclareMathOperator{\rank}{rank}
\DeclareMathOperator{\trace}{trace}
\DeclareMathOperator*{\argmin}{arg\,min}
\newcommand{\<}{\langle}
\renewcommand{\>}{\rangle}
\newtheorem{theorem}{Theorem}
\newtheorem{remark}{Remark}
\newtheorem{lemma}{Lemma}
\newtheoremstyle{dotless}{}{}{\itshape}{}{\bfseries}{}{ }{}
\theoremstyle{dotless}
\newtheorem*{proposition*}{Proposition}
\title{Adapting Regularized Low-Rank Models for Parallel Architectures}
\author{Derek Driggs\thanks{University of Colorado, Boulder CO 80309 USA}, Stephen Becker\footnotemark[1], Aleksandr Aravkin\thanks{University of Washington, Seattle, WA 98195 USA}}
\begin{document}
\maketitle

\begin{abstract}

We introduce a reformulation of regularized low-rank recovery models to take advantage of GPU, multiple CPU, and hybridized architectures. Low-rank recovery often involves nuclear-norm minimization through iterative thresholding of singular values. These models are slow to fit and difficult to parallelize because of their dependence on computing a singular value decomposition at each iteration. Regularized low-rank recovery models also incorporate non-smooth terms to separate structured components (e.g. sparse outliers) from the low-rank component, making these problems 
more difficult. 

Using Burer-Monteiro splitting and marginalization, we develop a smooth, non-convex formulation of regularized low-rank recovery models that can be fit with first-order solvers. We develop a computable certificate of convergence for this non-convex program, and use it to establish bounds on the suboptimality of any point. Using robust principal component analysis (RPCA) as an example, we include numerical experiments showing that this approach is an order-of-magnitude faster than existing RPCA solvers on the GPU. We also show that this acceleration allows new applications for RPCA, including real-time background subtraction and MR image analysis.

\end{abstract}

\section{Introduction}
\label{sec:intro}
Low-rank matrix decompositions are an effective tool for large-scale data analytics. Background subtraction, facial recognition, document indexing, and collaborative filtering all use 
low-rank matrix recovery algorithms~\citep{RPCA}. However, a low-rank structure is only a part of the story --- signals often exhibit additional structure. \textit{Regularized} low-rank recovery models seek to decompose a matrix $X \in \R^{m \times n}$ into the sum of a low-rank component $L \in \R^{m \times n}$ and another structured matrix $S \in \R^{m \times n}$, where $S$ could be sparse, clustered, non-negative, or have some other useful attribute (see e.g. \cite{GLRM}).

Searching for sparse and low-rank models gives rise to combinatorial formulations, which are intractable. Fortunately, relaxations often work well, and are amenable to 
modern optimization techniques. Consider finding an approximate decomposition of a data matrix $X$ into a sum $X = L + S$, where $S$ is sparse and $L$ low-rank. We are given partial linear measurements $b = \mathcal{A}(X)$. We can formulate the problem   
\begin{equation}
\label{eq:comb}
\min_{L,S} \quad \rank(L) + \textnormal{card}( S ), \quad \textnormal{subject to:} \quad \frac{1}{2} \| \mathcal{A}(L + S) -b\|_2^2 \le \epsilon,
\end{equation}
where the cardinality function $\textnormal{card}( \cdot )$ counts the number of nonzero entries of $S$, and the parameter $\epsilon$ \textcolor{black}{accounts for noise or model inconsistencies}. 
Rank is analogous to sparsity, since it is precisely the $\textnormal{card}( \cdot )$ function applied to the singular values of the argument. 
Problem~\eqref{eq:comb} is NP-hard~\citep{NatarajanSparseNP} and intractable for large problem sizes. However, both terms admit non-smooth convex relaxations.
 $\textnormal{card}( \cdot )$ can be replaced with the $\ell_1$-norm, and the rank function can be replaced with the nuclear norm, $\| \cdot \|_*$, 
 which is equal to the sum of the singular values of the argument:
\begin{equation}
\label{eq:relax}
\min_{L,S} \quad \| L \|_* + \| S \|_1, \quad \textnormal{subject to:} \quad \frac{1}{2} \| \mathcal{A}(L + S) - b \|_2^2 \le \epsilon
\end{equation}
\textcolor{black}{When the parameter $\epsilon$ is set to zero,~\eqref{eq:relax} is called {\it robust PCA} (RPCA), and can recover the lowest-rank $L$ and the sparsest $S$ under mild conditions. Setting $\epsilon>0$ in~\eqref{eq:relax} is more suitable for most applications, since it allows for noise in the measurements}; the problem is then called {\it stable principal component pursuit} (SPCP) \citep{RPCA_algo_Wright,StablePCP}.
SPCP stably recovers $L$ and $S$ with error bounded by $\epsilon$ \citep{StablePCP}. 
Both RPCA and SPCP are convex problems, much easier to solve than the original problem~\eqref{eq:comb}. 
While~\eqref{eq:relax} is an important problem, it is just one example of regularized low-rank models. 

\paragraph{Problem class.}
We are interested in general regularized low-rank approaches:
\begin{equation}
\label{eq:gen}
\min_{L,S} \quad \| L \|_* + \mathcal{L}( b, \mathcal{A}(L + S) ) + r(S),
\end{equation}
where $\mathcal{L}$ is a loss function and $r$ is a convex regularizer. $\mathcal{L}$, which can be {\it infinite-valued}, 
measures how well $L+S$ agrees with measured values of $X$. In RPCA, $\mathcal{L}(v) = \delta_{b}(v)$ is the
indicator function ensuring we match observed data, while in SPCP, $\mathcal{L}(v) = \delta_{2\sqrt{\epsilon}\mathbb{B}}(v-b)$ ensures
we are close to $b$. 
$\mathcal{L}$ can also be finite valued --- for example, a modified SPCP is given by solving
\begin{equation}
\label{eq:lag}
\min_{L,S} \quad \lambda_L \| L \|_* + \frac{1}{2} \| \mathcal{A}(L + S) -b \|_2^2 + \lambda_S \| S \|_1,
\end{equation}
where $\lambda_L, \lambda_S$ are tuning parameters, and $\mathcal{L}(\cdot) = \frac{1}{2}\|\cdot\|_2^2$. 

 The regularizer $r(S)$ is a convex penalty that promotes the desired structure of $S$. The $\ell_1$ norm $r(S) = \| S \|_1$ promotes sparsity in $S$;
 other important examples include the ordered-weighted $\ell_1$ norm (OWL norm) \cite{SLOPE,davis2015,OWLnorm}, 
 and the elastic net \citep{ElasticNet}, which both enforce sparsely correlated observations (clusters) in $S$. 
Other convex regularizers promote known group-sparse and hierarchical structures in $S$; 
see \cite{BachPenalties} for a survey.

\paragraph{Marginalization in $S$.} 
In all of the applications we consider, $r(S)$ is simple enough that we can efficiently minimize~\eqref{eq:gen} in $S$ for a given $L$. 
In the motivating SPCP example~\eqref{eq:lag}, we have 
\[
\min_S \left\{  \lambda_L \| L \|_* + \frac{1}{2} \| \mathcal{A}(L + S) -b \|_2^2 + \lambda_S \| S \|_1 \right\} = \lambda_L \| L \|_* + \rho_{\lambda_S}(\mathcal{A}(L) - b),
\]
where $\rho_{\lambda_S}$ is the Huber function (for a detailed derivation, see e.g.~\cite{LevelSet}). 
This motivates the definition
\begin{equation}
\varphi: L \mapsto  \min_{S} \left\{\mathcal{L}( \mathcal{A}(L+S) -b  ) + \lambda_S \cdot r(S)\right\},
\label{PhiDef}
\end{equation}
where $\varphi$ is smooth (see Section~\ref{Smooth}) with value and derivative 
either explicitly available as in the case of SPCP, or efficiently computable. 
After marginalizing out $S$,~\eqref{eq:gen} becomes 
\begin{equation}
\label{eq:genConvex}
\min_L\; \lambda_L \| L \|_* + \varphi(L).
\end{equation}
 Proximal gradient methods are applicable, 
but require computing the expensive and communication-heavy singular value decomposition (SVD) at each iteration. 
To ameliorate this bottleneck, it is possible to parallelize the SVD step using communication-avoiding algorithms. We develop such an approach in Section \ref{tsqr} to use as a performance baseline.

However, the focus of this paper is to use a non-convex, factorized representation of the nuclear norm that avoids the SVD entirely. This reformulated program offers more speedup through parallelization than communication-avoiding approaches, and the introduced non-convexity can be easily controlled. The smooth structure of $\varphi$ allows a computable {\it optimality certificate} for the non-convex case, which is developed and 
explored in Section~\ref{TheCon}.

\paragraph{Non-convex reformulation.}
To avoid the SVD entirely, we consider non-convex reformulations for regularized low-rank recovery models, 
using the factorization $L = UV^T$, where $U \in \R^{m \times k}, \ V \in \R^{n \times k}$, and $k \ll m,n$. This factorization imposes the rank-constraint $\mathrm{rank}(L) \le k$. In addition, we can further penalize rank using the representation (see e.g.~\cite{BachSparseFactor,Recht07,MMMF2005})
\begin{equation}
\label{eq:factChar}
\| L \|_* \equiv \inf_{L = UV^T} \,\frac{1}{2} ( \| U \|_F^2 + \| V \|_F^2 ).  
\end{equation}
This equivalence only holds if the ranks of $U$ and $V$ are large enough, and makes it possible to  
 maintain a low-rank $L = UV^T$ without requiring SVDs. 
The non-convex analogue to~\eqref{eq:genConvex} is given by 
\begin{equation}
\label{eq:smoothNC}
\argmin_{U,V} \quad \varphi(UV^T) +  \frac{\lambda_L}{2} (\| U \|_F^2 + \| V \|_F^2).
\end{equation}

\paragraph{Roadmap.}

We provide a survey of related work in Section~\ref{PWaC}. We consider two theoretical issues related to the smooth reformulation~\eqref{eq:smoothNC} in Section \ref{TheCon}. 
First, the factorization $L = UV^T$ gives a non-convex program that could introduce suboptimal stationary points and local minima. 
Second, it is important to understand the properties of the map $\varphi$ obtained by marginalizing over $S$ in the context 
of specific optimization algorithms. 
In Section \ref{Init}, we study heuristic initialization methods, which are important for non-convex programs. 
We present detailed numerical studies, comparing the non-convex method with competing alternatives, in Section~\ref{BurMontEx}. In this section, we also demonstrate the effectiveness of our non-convex formulation of low-rank recovery models for real-time video processing.
There are several approaches to nuclear norm minimization that focus on accelerating the SVD step, either by reducing its communication costs or by replacing it with a simpler subproblem, as in the Frank-Wolfe method. In Section \ref{sec:fastSVDcompare}, we develop two such methods for general regularized low-rank recovery models, and we compare these to our non-convex approach.
Our numerical experiments illustrate that our non-convex program is able to solve regularized low-rank recovery problems faster than existing solvers --- in many cases, by an order of magnitude --- without sacrificing accuracy in the solution.
We end with final thoughts in Section~\ref{Conc}.

\section{Prior Work and Contributions}
\label{PWaC}

Several authors have used alternating minimization to solve non-convex, factorized matrix completion models \citep{GAGG13,HardtAltMin,MatCompAltMin,KeshPhD,RegForMatComp,NoisyMatComp}. 
This line of research is supported by the existence of conditions that guarantee fast convergence to a globally optimal solution \citep{GAGG13,HardtAltMin,RegForMatComp,NoisyMatComp}, sometimes at a linear rate \citep{MatCompAltMin,GLRM}. 
These models are a special class of~\eqref{eq:smoothNC}, when no regularizer is present (i.e., $r(S) \equiv 0$). 

In \cite{GLRM}, the authors review the use of alternating minimization to solve regularized PCA problems of the following form:
\begin{equation}
\label{LowRankEq}
\min_{U,V} \quad \mathcal{L}(UV^T,X) + r(U) + \tilde{r}(V).
\end{equation}
Matrix completion models and RPCA can adopt the split-form in \eqref{LowRankEq}, but SPCP and other regularized models cannot. 
Our analysis applies to more general regularizers and offers an approach using first-order solvers rather than alternating minimization.

The authors of \cite{Shen2012} develop a split RPCA program and solve it using alternating minimization as well.
While their technique can be an order of magnitude faster than competitive convex solvers (such as \textcolor{black}{the inexact augmented Lagrangian method (IALM)} from \cite{IALM}), its performance suffers when the magnitude of the sparse component is large compared to the magnitude of the low-rank component.

In \cite{Procrustes}, the authors consider provable recovery of a low-rank solution to a system of linear equations using a non-convex, factorized program. In a similar vein, the work of \cite{CaramanisRPCA} extends these ideas to investigate a non-convex formulation of robust PCA that can be solved with gradient descent.

Another non-convex approach to RPCA is considered in \cite{NonConRPCA}. In contrast to the research discussed previously, 
their method does not use a low-rank factorization of $L$. Instead, these authors develop an algorithm of alternating projections, 
where each iteration sequentially projects $L$ onto a set of low-rank matrices and thresholds the entries of $S$. 
Their technique also comes with recovery and performance guarantees.

We develop a general non-convex formulation of regularized low-rank recovery models that can be solved using first-order optimization algorithms. 
We address problems associated with local minima and spurious stationary points that accompany non-convexity, 
by showing that all local minima of our model have the same objective value, 
and by providing a certificate to prove that a given solution is not a spurious stationary point.  
We also show that our method is particularly well-suited for the GPU.

\section{Theoretical Considerations}
\label{TheCon}

We discuss three issues related to the approach~\eqref{eq:smoothNC}.
First, we study the convexity and differentiability of the marginal function~\eqref{eq:genConvex} in Sections~\ref{subsec:conv} and \ref{Smooth}.
Next, moving from~\eqref{eq:genConvex} to~\eqref{eq:smoothNC} by factoring $L$, we transform a convex problem into a non-convex one. 
This creates two issues:  an implicit rank constraint $\rank(L) \le k$ that is not present in~\eqref{eq:smoothNC}, 
and potential for local minima and spurious stationary points in the non-convex variant that do not correspond to a solution of the convex problem.  
We address these issues in Section~\ref{sec:minima}. 

Section~\ref{sec:Certificate} introduces a computable certificate that can be used to check whether the non-convex model has converged to a global solution. Using the smoothness of $\varphi$, we derive a simple optimality certificate that can 
be tracked while optimizing~\eqref{eq:smoothNC} to identify either when the implicit rank of the factors is too small or  
when we have converged to a minimizer of~\eqref{eq:genConvex}.

\subsection{Convexity of $\varphi$}
\label{subsec:conv}

\noindent The convexity of the marginal function~\eqref{eq:genConvex} is a well-known result in convex analysis, see e.g.~\cite[Proposition 2.22]{VariationalAnalysis}. 
We include a short proof of this fact.  

\begin{lemma}
Let $f : \mathbb{R}^{m \times n} \times \mathbb{R}^{m \times n} \to \mathbb{R}$ be a convex function, and define $g(L) = \min_{S \in \mathbb{R}^{m \times n}} f(L,S)$ for $L \in \mathbb{R}^{m \times n}$. The function $g$ is convex.
\label{convexity}
\end{lemma}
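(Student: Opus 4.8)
The plan is to prove convexity of $g$ directly from the definition: I must show that for any $L_1, L_2 \in \mathbb{R}^{m \times n}$ and any $\lambda \in [0,1]$, we have $g(\lambda L_1 + (1-\lambda) L_2) \le \lambda g(L_1) + (1-\lambda) g(L_2)$. The natural strategy is to exploit the joint convexity of $f$ together with the fact that the minimization over $S$ gives us freedom to choose a convenient competitor for the minimizing $S$ at the averaged point.

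First I would fix $L_1, L_2$ and $\lambda \in [0,1]$, and let $S_1, S_2$ be points (nearly) attaining the minima defining $g(L_1)$ and $g(L_2)$ respectively. The key idea is to evaluate $g$ at the convex combination $\lambda L_1 + (1-\lambda)L_2$ and bound it using the \emph{specific} feasible choice $S = \lambda S_1 + (1-\lambda) S_2$ rather than the true minimizer. Since $g$ is defined as an infimum over all $S$, plugging in this particular $S$ can only overestimate, giving
\[
g(\lambda L_1 + (1-\lambda)L_2) \le f\big(\lambda L_1 + (1-\lambda)L_2,\; \lambda S_1 + (1-\lambda)S_2\big).
\]
Next I would apply the joint convexity of $f$ in the pair $(L,S)$ to the right-hand side: because $\big(\lambda L_1 + (1-\lambda)L_2,\ \lambda S_1 + (1-\lambda)S_2\big) = \lambda (L_1,S_1) + (1-\lambda)(L_2,S_2)$, convexity yields $f(\lambda(L_1,S_1)+(1-\lambda)(L_2,S_2)) \le \lambda f(L_1,S_1) + (1-\lambda) f(L_2,S_2)$. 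Finally, $f(L_i,S_i) = g(L_i)$ (or is arbitrarily close to it), so chaining the inequalities delivers the desired bound.

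The one technical subtlety, which I regard as the main obstacle rather than the computation, is the existence of the minimizers $S_1, S_2$: the statement defines $g$ via $\min_S$, but a minimizer need not exist in general for a convex $f$ without coercivity or lower-semicontinuity assumptions. I would handle this cleanly by working with the infimum and an $\varepsilon$-argument: choose $S_1, S_2$ with $f(L_i, S_i) \le g(L_i) + \varepsilon$, run the same chain of inequalities to obtain $g(\lambda L_1 + (1-\lambda)L_2) \le \lambda g(L_1) + (1-\lambda) g(L_2) + \varepsilon$, and let $\varepsilon \downarrow 0$. This avoids any appeal to attainment. Since in the paper's applications $f$ is the sum of a loss and a regularizer for which the minimizers do exist, one could alternatively assume attainment and skip the $\varepsilon$; I would present the $\varepsilon$-version for full generality, as it keeps the proof self-contained and requires nothing beyond the stated convexity of $f$.
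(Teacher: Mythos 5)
Your proposal is correct and follows essentially the same argument as the paper: bound $g$ at the convex combination by plugging in the feasible point $\lambda S_1 + (1-\lambda)S_2$ and then invoke joint convexity of $f$. Your $\varepsilon$-argument for non-attainment of the infimum is a minor technical refinement that the paper glosses over (it assumes minimizers exist), but it does not change the substance of the proof.
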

\begin{proof}
For all $L_1, L_2, S_1, S_2 \in \mathbb{R}^{m \times n}$ and $t \in (0,1)$,
\begin{align*}
g(t L_1 + (1-t) L_2) &= \min_{S \in \mathbb{R}^{m \times n}} f(t L_1 + (1-t) L_2, S) \\
&\le f(t L_1 + (1-t) L_2, t S_1 + (1-t) S_2) \\
&\le t f(L_1,S_1) + (1-t) f(L_2,S_2).
\end{align*}
As this inequality holds for arbitrary $S_1$ and $S_2$, it holds for any minimizing $S^{\star}$:
\begin{align*}
g(t L_1 + (1-t) L_2) &\le t f(L_1,S^{\star}) + (1-t) f(L_2,S^{\star}) \\
&= t g(L_1) + (1-t) g(L_2).
\end{align*}
\end{proof}
\noindent It follows immediately that $\varphi$ as defined in \eqref{PhiDef} is convex when $\mathcal{L}$ and $r$ are convex. 

\subsection{Smoothness of $\varphi$ }
\label{Smooth}

When $\mathcal{L}$ is the least-squares error and $\mathcal{A}$ is an injective linear operator, Lipschitz differentiability of $\varphi$ is immediate from the strong convexity of $\mathcal{L}$. Support for this claim is given, for example, in \cite{CombettesBook}, where the relevant proposition for the specific case where $\mathcal{A}$ is a reshaping operator is as follows:
\begin{proposition*}
\textbf{\cite[Prop. 12.29]{CombettesBook}:} Let $r$ be a proper lower-semicontinuous convex function, and let $ ^{\mu}r (\mathcal{A}(L) -\mathrm{vec}(S)- b) = \inf_{S} \ r(S) + \frac{1}{2 \mu} \| \mathcal{A}(L) -\mathrm{vec}(S)- b \|_2^2$ be the Moreau envelope of $f$ with parameter $\mu$. 
Then $ ^{\mu}r$ is differentiable and its gradient is $\mu^{-1}$-Lipschitz continuous.
\end{proposition*}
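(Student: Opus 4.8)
The plan is to establish the two claims in turn: first that the envelope $h \defeq {}^{\mu}r$ is everywhere differentiable with $\nabla h(x) = \mu^{-1}\bigl(x - P(x)\bigr)$, where $P(x)$ denotes the unique minimizer in the defining infimum, and then that this gradient is $\mu^{-1}$-Lipschitz. Throughout I write the envelope abstractly as $h(x) = \inf_{y}\bigl\{ r(y) + \tfrac{1}{2\mu}\|x-y\|_2^2\bigr\}$, with $y$ playing the role of $\mathrm{vec}(S)$ and $x$ that of $\mathcal{A}(L)-b$. First I would settle the well-posedness of $P$: for fixed $x$ the inner map $y \mapsto r(y) + \tfrac{1}{2\mu}\|x-y\|_2^2$ is proper, lower-semicontinuous, and strongly convex with modulus $\mu^{-1}$ (the quadratic is strongly convex and $r$ is convex), hence coercive, so it attains its infimum at a single point $P(x)$. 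Strong convexity is exactly what forces uniqueness, making $P$ single-valued. Note also that $h$ is a marginal of the jointly convex function $(x,y)\mapsto r(y)+\tfrac{1}{2\mu}\|x-y\|_2^2$, so $h$ is convex by Lemma~\ref{convexity}; this convexity is what lets me detect differentiability through the subdifferential.

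Next I would derive the gradient. The first-order optimality condition for $P(x)$ is the inclusion $\mu^{-1}\bigl(x-P(x)\bigr) \in \partial r\bigl(P(x)\bigr)$, which identifies the candidate gradient $g(x) \defeq \mu^{-1}\bigl(x-P(x)\bigr)$. That $g(x)\in\partial h(x)$ follows from a one-line estimate: bounding $h(x')$ by the inner objective evaluated at the suboptimal point $P(x)$ and applying the subgradient inequality for $r$ at $P(x)$ yields $h(x') \ge h(x) + \langle g(x), x'-x\rangle$ for all $x'$. Since $h$ is convex, differentiability amounts to showing $\partial h(x)$ is the singleton $\{g(x)\}$. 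The cleanest route is conjugacy: $h$ is the infimal convolution of $r$ with $\tfrac{1}{2\mu}\|\cdot\|_2^2$, so $h^{*} = r^{*} + \tfrac{\mu}{2}\|\cdot\|_2^2$, which is $\mu$-strongly convex; the conjugate of a $\mu$-strongly convex function is finite, $C^{1}$, and has $\mu^{-1}$-Lipschitz gradient, and by Fenchel–Moreau biconjugacy ($r$ closed convex) this conjugate is exactly $h$. This single argument delivers both differentiability and the Lipschitz bound at once.

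If one prefers to avoid conjugate duality, the Lipschitz estimate can be obtained directly from firm nonexpansiveness of $P$. Applying monotonicity of $\partial r$ to the inclusions $\mu^{-1}\bigl(x-P(x)\bigr)\in\partial r\bigl(P(x)\bigr)$ and $\mu^{-1}\bigl(x'-P(x')\bigr)\in\partial r\bigl(P(x')\bigr)$ gives $\langle x-x',\,P(x)-P(x')\rangle \ge \|P(x)-P(x')\|_2^2$. Setting $d = x-x'$ and $e = P(x)-P(x')$, this reads $\langle d,e\rangle \ge \|e\|_2^2$, so $\|d-e\|_2^2 = \|d\|_2^2 - 2\langle d,e\rangle + \|e\|_2^2 \le \|d\|_2^2 - \|e\|_2^2 \le \|d\|_2^2$. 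Because $\nabla h(x)-\nabla h(x') = \mu^{-1}(d-e)$, this gives $\|\nabla h(x)-\nabla h(x')\|_2 \le \mu^{-1}\|x-x'\|_2$, the claimed constant.

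The routine parts are the existence/uniqueness argument and, once the gradient formula is in hand, the final Lipschitz computation. The technical heart is upgrading "$g(x)$ is a subgradient" to "$g(x)$ is the gradient," i.e. proving genuine differentiability rather than mere subdifferentiability. I expect the strong-convexity/smoothness duality that makes $h^{*}$ strongly convex to do the real work here, so that is the step I would treat most carefully.
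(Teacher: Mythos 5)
The paper does not prove this statement at all --- it is quoted verbatim as a cited result, \cite[Prop.~12.29]{CombettesBook}, invoked to justify the smoothness of $\varphi$ --- so there is no in-paper argument to compare against, and any correct proof you give is necessarily ``a different route.'' Your proof is correct and is essentially the standard textbook argument (indeed, close to the one in Bauschke--Combettes itself): strong convexity of the inner objective gives existence and uniqueness of the proximal point $P(x)$; the identity $h = r \mathbin{\square} \tfrac{1}{2\mu}\|\cdot\|_2^2$ gives $h^{*} = r^{*} + \tfrac{\mu}{2}\|\cdot\|_2^2$, whose $\mu$-strong convexity yields, via biconjugation, that $h$ is $C^1$ with $\mu^{-1}$-Lipschitz gradient; and the alternative firm-nonexpansiveness computation $\langle x-x', P(x)-P(x')\rangle \ge \|P(x)-P(x')\|_2^2$ correctly delivers the same Lipschitz constant without duality. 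One small imprecision: you describe the step establishing $g(x)\in\partial h(x)$ as ``bounding $h(x')$ by the inner objective evaluated at the suboptimal point $P(x)$,'' but that evaluation produces the \emph{upper} bound $h(x') \le h(x) + \langle g(x), x'-x\rangle + \tfrac{1}{2\mu}\|x'-x\|_2^2$ (a descent-lemma inequality), not the subgradient lower bound you then quote; to get the lower bound one instead applies the subgradient inequality for $r$ at $P(x)$ to $r(P(x'))$ inside $h(x')$, or one notes that for a convex $h$ a quadratic upper bound touching at $x$ already forces $\partial h(x)=\{g(x)\}$. Since your conjugacy argument and your nonexpansiveness argument are each complete on their own, this does not affect the validity of the proof. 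Relative to the paper, your writeup buys self-containedness; the paper's citation buys brevity and also covers the generality actually needed in Section~\ref{Smooth} (composition with the linear map $\mathcal{A}$ and more general strongly convex losses $\mathcal{L}$), which your argument handles only for the quadratic case stated here.
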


More generally, $\varphi(L)$ is differentiable as long as $r(S)$ has a unique minimizer and the loss function $\mathcal{L}$ is convex and smooth. 
By~\cite[Theorem 10.58]{VariationalAnalysis}, we have 
\begin{equation}
\partial \varphi(\cdot) = \mathcal{A}^*\nabla\mathcal{L}\left(\mathcal{A}(\cdot) -b - \mathcal{A}(S^{\star})\right),
\label{eq:diff}
\end{equation}
where $S^{\star}$ is any minimizer of \eqref{eq:lag}. The subdifferential is a singleton if and only if the minimizing $S^{\star}$ is unique, 
and a unique minimum is guaranteed if the sum $\mathcal{L}(\mathcal{A}(L + S) - b) + r(S)$ in \eqref{eq:lag} is strictly convex.

Using SPCP as an example, we have
\[
\varphi(L) = \min_{S} \left\{\frac{1}{2} \|\mathcal{A}(L)+\mathrm{vec}(S)-b\|_2^2 + \lambda_S \|S\|_1\right\},
\]
where sparse outliers $S$ are fully observed. 
The objective is strongly convex in $S$, and has the unique minimizer
\[
S^{\star} = \mathrm{prox}_{\lambda_S \|\cdot\|_1}(\mathcal{A}(L) - b).
\]
Therefore, $\varphi(L)$ is smooth, with 
\[
\nabla \varphi(L) = \mathcal{A}^*\left(\mathcal{A}(L)+\mathrm{vec}(S^{\star})-b\right).
\]
We used~\eqref{eq:diff} rather than the parametric form of $\varphi(L)$ to obtain its value and gradient. Using \eqref{eq:factChar} to replace the nuclear norm and $\varphi$ to eliminate the non-smooth regularizer, we now have a smooth optimization problem amenable to first-order methods.

For minimizing smooth objectives, quasi-Newton methods are often faster than algorithms that require only first-order smoothness, such as gradient-descent. The convergence rates of quasi-Newton methods depend on second-order smoothness, which does not hold in problems such as SPCP, since e.g. the Huber function is only $\mathcal{C}^1$. However, empirical results (including those presented in Section \ref{BurMontEx}) suggest that these methods are effective.

\subsection{Rank and Local Minima}
\label{sec:minima}

The factorized problem~\eqref{eq:smoothNC} is not equivalent to~\eqref{eq:genConvex} because of the implicit rank constraint. 
However, we can show that~\eqref{eq:smoothNC} is equivalent to the rank-constrained problem
\begin{equation}
\min_{L} \quad \lambda_L \| L \|_* + \varphi(L) \quad \textnormal{subject to:} \quad \rank(L) \le k.
\end{equation}
This follows immediately from~\eqref{eq:factChar}. In order for the split problem to recover the solution to \eqref{eq:comb}, it must be initialized with a $k$ larger than the rank of the minimizing $L$. However, a larger $k$ slows computation, so it must be chosen with care. This issue is considered in more depth in Section \ref{Init}.

We want to be sure that any local minimum of~\eqref{eq:smoothNC} corresponds to a (global) minimum of~\eqref{eq:genConvex}. 
The following theorem combines ideas from \cite{BurerMonteiro2005} and \cite{Aravkin2014} to show this holds 
provided that $k$ is larger than the rank of the minimizer for~\eqref{eq:genConvex}.
\begin{theorem}
\label{NoLoca}
Consider an optimization problem of the following form:
\begin{equation} \label{eq:1A1}
\min_{\X \succeq 0} \quad f(\X), \quad \textrm{such that}\quad \rank(\X)\le \rankVar,
\end{equation}
where  $\X \in \mathbb{R}^{\dimVarX}$ is a positive semidefinite real matrix, and $f$ is a lower semi-continuous function mapping to $[-\infty,\infty]$ and has a non-empty domain over the set of positive semi-definite matrices. 
Using the change of variable $\X = \P\P^T$, take $\P \in \mathbb{R}^{\dimVar \times \rankVar}$, and consider the problem   
\begin{equation} \label{eq:1A2}
\min_{\P} \quad g(\P) \defeq f(\P\P^T).
\end{equation}
Let $\bar \X = \bar \P \bar \P^T$, where $\bar \X$ is feasible for~\eqref{eq:1A1}. Then $\bar \X$ is a local minimizer of~\eqref{eq:1A1} if and only if $\bar \P$ is a local minimizer of~\eqref{eq:1A2}.
\end{theorem}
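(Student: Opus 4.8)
The plan is to reduce the claim to topological properties of the factorization map $\pi : \P \mapsto \P\P^T$, which sends $\R^{\dimVar \times \rankVar}$ \emph{onto} the feasible set of~\eqref{eq:1A1}, namely the positive semidefinite matrices of rank at most $\rankVar$. Both implications hinge on comparing neighborhoods of $\bar\P$ in $\R^{\dimVar \times \rankVar}$ with neighborhoods of $\bar\X$ inside the feasible set, and throughout I would work with the Frobenius norm (the choice is immaterial by equivalence of norms).

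The forward direction, that $\bar\X$ being a local minimizer of~\eqref{eq:1A1} implies $\bar\P$ is a local minimizer of~\eqref{eq:1A2}, is the easy one. Since $\pi$ is continuous and $g = f\circ\pi$, if $\bar\X$ minimizes $f$ over the feasible set within a ball of radius $\delta$, then by continuity there is a radius $\eta$ such that $\|\P-\bar\P\|_F<\eta$ forces $\|\P\P^T-\bar\X\|_F<\delta$. Every such $\P\P^T$ is automatically feasible for~\eqref{eq:1A1} (it is PSD of rank at most $\rankVar$), so $g(\P)=f(\P\P^T)\ge f(\bar\X)=g(\bar\P)$, establishing the local minimality of $\bar\P$.

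The reverse direction is where the real work lies, and the obstacle is that $\pi$ need not be an open map near rank-deficient points, so a neighborhood of $\bar\P$ need not map \emph{onto} a neighborhood of $\bar\X$ in the feasible set. The remedy is a lifting lemma: for every $\epsilon>0$ there is a $\delta>0$ so that any feasible $\X$ with $\|\X-\bar\X\|_F<\delta$ admits a factorization $\X=\P\P^T$ with $\|\P-\bar\P\|_F<\epsilon$. I would prove this by contradiction and compactness. If it failed, there would be feasible $\X_j\to\bar\X$ all of whose factors remain a fixed distance $\epsilon_0$ from $\bar\P$. Choosing arbitrary factors $\P_j$, the bound $\|\P_j\|_F^2=\trace(\X_j)\to\trace(\bar\X)$ keeps the sequence bounded, so a subsequence converges to some $\P^\star$ with $\P^\star(\P^\star)^T=\bar\X$. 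The crucial structural fact is that any two $\dimVar\times\rankVar$ factors of the same matrix differ by an orthogonal matrix, so $\P^\star=\bar\P Q$ for some $Q\in O(\rankVar)$. Replacing each $\P_j$ in the subsequence by $\P_j Q^T$, which remains a valid factor of $\X_j$ because $Q$ is orthogonal, produces factors converging to $\bar\P Q Q^T=\bar\P$, contradicting the separation $\epsilon_0$.

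With the lifting lemma in hand the reverse implication is immediate: given that $\bar\P$ minimizes $g$ on a ball of radius $\epsilon$, pick the matching $\delta$; then any feasible $\X$ with $\|\X-\bar\X\|_F<\delta$ has a factor $\P$ with $\|\P-\bar\P\|_F<\epsilon$, whence $f(\X)=g(\P)\ge g(\bar\P)=f(\bar\X)$, so $\bar\X$ is a local minimizer of~\eqref{eq:1A1}. I would isolate two facts as lemmas: the orthogonal-orbit description of the fibers of $\pi$ (which legalizes the $O(\rankVar)$ correction) and the boundedness-plus-compactness argument. The former is the conceptual heart of the proof, since it is precisely the non-uniqueness of the factorization $\X=\P\P^T$ that both creates the difficulty in the reverse direction and supplies its resolution.
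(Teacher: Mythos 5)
Your proposal is correct and follows essentially the same route as the paper's proof: the easy direction via continuity of $\P \mapsto \P\P^T$, and the hard direction via boundedness of factor sequences, Bolzano--Weierstrass, and the fact that the fibers of the factorization map are orbits of $O(\rankVar)$. The only difference is organizational --- you prove the hard direction directly by packaging the compactness argument into an explicit lifting lemma and rotating the approximating factors by $Q^T$, whereas the paper argues by contrapositive and handles the ``wrong limit factor'' by invoking the orbit-invariance of local minimality established at the outset.
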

The proof of Theorem \ref{NoLoca} is deferred to Appendix A. Using the SDP formulation of the nuclear norm presented in \cite{fazel2001rank}, 
our problem can be recast as a semi-definite program so that we can apply Theorem~\ref{NoLoca}. Define 
\begin{equation}
Z = \left[ \begin{array}{c}
U \\
V \\
\end{array} \right]
\left[ \begin{array}{c}
U \\
V \\
\end{array} \right]^T
= \left[ \begin{array}{cc}
UU^T & L    \\
L^T    & VV^T \\
\end{array} \right].
\end{equation}
The matrix $Z$ is positive semi-definite, and has form $Z = \P \P^T$, with
\(
P = \left[ \begin{array}{c}
U \\
V \\
\end{array} \right].
\)
Let $\mathcal{R}(\cdot)$ be the function that extracts the upper-right block of a matrix (so that $\mathcal{R}(Z) = L$), and let 
\begin{equation}
f(Z) = \frac{\lambda_L}{2} \trace(Z) + \varphi( \mathcal{R}(Z) ).
\end{equation}
Using equation (3), we now see that the rank-constrained problem is equivalent to
\begin{equation}
\min_{Z \succeq 0} \quad f(Z), \quad \textnormal{such that} \quad \rank(Z) \le k.
\end{equation}
Applying Theorem \ref{NoLoca}, we can now be assured that $\overline{P}$ is a local minimizer to the split program if and only if $\overline{Z}$ is local minimizer of the original problem. This is equivalent to the statement that the point $(\overline{U},\overline{V})$ is a local minimizer of the split problem if and only if $\overline{L} = \overline{U} \ \overline{V}^T$ is a local minimizer of the original, rank-constrained program. 

\subsection{Certificate of Convergence}
\label{sec:Certificate}

Although Theorem \ref{NoLoca} asserts that the split problem and the rank-constrained problem have the same local minima, the rank-constrained problem is itself non-convex, so we have not guaranteed that every stationary point of the split problem solves the convex program.
Using an approach that builds on \cite{GLRM}, we develop a certificate to check whether a recovered solution to (6) corresponds to a spurious stationary point when $\varphi$ is convex (in particular when $\mathcal{L}$ and $r$ are convex). This technique can also be used on the convex formulation as a method to check the distance to optimality.

We base our certificate on the following analysis. Let $F$ be any proper convex function, then Fermat's rule states that the set of minimizers of $F$ are points $L$ such that $0 \in \partial F(L)$,
where $\partial(\cdot)$ denotes the subdifferential. One could take a candidate point $L$, construct $\partial F(L)$, and if it contains $0$, conclude that $L$ is the minimizer; this is not realistic for large-scale problems. 
Instead we find
\newcommand{\Eps}{\mathcal{E}}
\begin{equation}
\Eps \in \partial F(L)
\end{equation}
where $\Eps$ is small. Let $L^\star$ denote any minimizer of $F$, then from the definition of a subgradient, 
$F(L^\star) \ge F(L) + \< \Eps, L^\star - L \>$.
Then 
\begin{equation} \label{eq:Fbound}
F(L) - F(L^\star) \le \< \Eps, L-L^\star \> \le \|\Eps\|_p \cdot \|L-L^\star\|_d
\end{equation}
for any pair of primal and dual norms $\|\cdot\|_p$ and $\|\cdot\|_d$. We discuss bounding $\|\Eps\|_p$ and $\|L-L^\star\|_d$ in the next two subsections.

\paragraph{Finding an approximate zero $\Eps$}
The key to the certificate is analyzing the marginalized problem \eqref{eq:genConvex} in terms of $L$, rather than the problem \eqref{eq:gen} in terms of $(L,S)$. 
The point $L=UV^T$ is the optimal solution of~\eqref{eq:genConvex} if and only if
\begin{equation}
\label{eq:reducedSub}
0 \in \partial \big( \| L \|_* + \varphi( L ) \big).
\end{equation}
We have set $\lambda_L=1$ for convenience, or alternatively one can absorb $\lambda_L^{-1}$ into $\varphi$. 
Since both the nuclear norm and $\varphi$ are proper lower semi-continuous and the intersection of the interior of their domains is non-empty (in particular they are finite valued), 
then by \cite[Cor.\ 16.38]{CombettesBook}, we have that
\begin{equation*}
0 \in \partial \big( \| L \|_* + \varphi( L ) \big) \quad \Leftrightarrow \quad 0 \in \big(  \partial \| L \|_* \big) + \big( \partial \varphi( L ) \big).
\end{equation*}
Both of these subdifferentials are computable.  Let 
\newcommand{\bigU}{\widetilde{U}}
\newcommand{\bigV}{\widetilde{V}}
\newcommand{\Uperp}{U_2}
\newcommand{\Vperp}{V_2}
\newcommand{\zero}{0}
\newcommand{\eye}{I}
\begin{equation}
L = \bigU \Sigma \bigV^T = \left[ \begin{array}{c c}
U_1 & \Uperp
\end{array} \right]
\left[ \begin{array}{c c}
\Sigma_1 & \zero \\
\zero & \zero
\end{array} \right]
\left[ \begin{array}{c}
V_1^T \\
\Vperp^T
\end{array} \right]
\end{equation}
be the (full) SVD of $L$, where $U_1\in\R^{m\times r}$ and $V_1\in\R^{n\times r}$.
The subdifferentials of the nuclear norm at $L$ comprises matrices of the form $X=U_1 V_1^T + W$, where $U_1,V_1$ contain the left and right singular vectors of $L$ that correspond to non-zero singular values,%
\footnote{The orthogonal matrices $U_1$ and $V_1$ should not be confused with the variables $U$ and $V$ that are used as a factorization of $L$, as $U$ and $V$ are not necessarily orthogonal. In fact, $U_1$ and $V_1$ can be efficiently computed from the factorization $L = UV^T$ by taking the QR-decompositions $U = Q_U R_U$ and $V = Q_V R_V$ so $L = Q_U (R_U R_V^T) Q_V^T$. Perform a SVD on the small inner matrix to write $(R_U R_V^T) = U_R\Sigma V_R^T$, and hence $\Sigma$ are the nonzero singular values of $L$ and $U_1 = Q_UU_R$ and $V_1 = Q_V V_R$ are the corresponding left and right singular vectors.
}
and $W$ satisfies the conditions $U_1^T W = 0$, $W V_1 = 0$ and $\| W \|_2 \le 1$.
Equivalently, 
\begin{equation} \label{eq:subdiff}
X\in\partial\|L\|_* \quad\text{if and only if}\quad
\exists W'\,\in\R^{m-r \times n-r} \,\text{s.t.} \;
\bigU^T X \bigV = 
\left[ \begin{array}{c c}
\eye & \zero \\
\zero & W'
\end{array} \right], \quad \|W'\| \le 1.
\end{equation}
We can find an explicit matrix $D\in \partial\varphi(L)$ using \eqref{eq:diff}, and it is unique under the smoothness conditions described in Section~\ref{Smooth}.

Altogether, we can guarantee a point $\Eps \in \partial \|L\|_* + \partial\varphi(L)$ such that
\begin{align}
\|\Eps\|_F^2 &= \min_{ X \in \partial \|L\|_*} \, \| X + D \|_F^2 \notag \\ \notag
&= \min_{ X \in \partial \|L\|_*} \, \|\bigU^T \left( X + D\right) \bigV \|_F^2 \\ \notag
&= \min_{ \|W'\|\le 1 } \, \left\| \left[ \begin{array}{c c}
\eye & \zero \\ \notag
\zero & W'
\end{array} \right] +  \bigU^T D \bigV \right\|_F^2 \\
&=  
    \| \eye - U_1^TDV_1 \|_F^2 + \|U_1^TD\Vperp\|_F^2 +
    \|\Uperp^T DV_1 \|_F^2 
    +  \min_{\|W'\|\le 1 } \, \| \Uperp^T D \Vperp - W' \|_F^2 .
\end{align}
In the first equation, the minimum is achieved since the squared-norm is continuous and the set is compact, and \eqref{eq:subdiff} is used in the third equation. 
Each term of the fourth equation can be calculated explicitly, with the last term obtained via projection onto the unit spectral-norm ball, which requires the SVD of the small matrix $\Uperp^T D \Vperp$. 
This bound on $\|\Eps\|_F$ can then be used in \eqref{eq:Fbound}.

Most terms above can be computed efficiently, in the sense that $\Uperp$ and $\Vperp$ never need to be explicitly computed (which is most important when $r$ is small, since then $\Uperp$ and $\Vperp$ have $m-r$ and $n-r$ columns, respectively), and therefore the computation consists only of matrix products, thin-QR factorizations, and an $r\times r$ SVD, leading to a complexity of $\mathcal{O}( rmn + r^2\cdot(m+n) + r^3)$.
For example, we compute
\[
\| U_1^T D \Vperp \|_F^2 = \|U_1^T D\|_F^2 - \|U_1^T D V_1\|_F^2
\]
and $\|\Uperp^T D V_1\|_F^2$ is computed analogously.
The final term requires an unavoidable $(m-r)\times (n-r)$ SVD factorization, so for large $m,n$ the certificate may be computed as a final check rather than at each iterate.

\paragraph{Bounding the distance to the feasible set}
We seek a bound on $\|L-L^\star\|_d$ in an appropriate norm, which together with \eqref{eq:Fbound} gives us a bound on the objective function difference $F(L) - F(L^\star)$. Since the bound on $\Eps$ is in the Frobenius norm, ideally we set $\|\cdot\|_d = \|\cdot\|_F$, but bounds in any norm will work using $\|L\|_F \le \sqrt{\text{rank}(L)}\|L\|$ and $\|L\|_F \le \|L\|_*$.

Letting $F(L) = \|L\|_* + \varphi(L)$, we first bound $F(L^\star)$ by computing $F(L)$ for explicit choices of $L$. To be concrete, in this section we assume $r(S) = \|S\|_1$ with $\lambda_S=1$, and $\mathcal{L}(\cdot) = \frac{1}{2}\|\cdot\|^2$. Choosing $L$ such that $\mathcal{A}(L)=b$ means that $S=0$ in the definition of $\varphi$ \eqref{PhiDef}, and hence $F(L) = \|L\|_*$. Choosing $L=0$ and $S=0$ gives the bound $F(L) \le \frac{1}{2}\|b\|^2$. A third choice is to explicitly compute $F(L)$ at all iterates in the algorithm and record the best.

\newcommand{\Fbnd}{F_\text{bound}}
Denoting $F^\star = \min_L F(L)$ and using $F^\star \le \Fbnd$, 
non-negativity of $\varphi$ immediately implies $\|L^\star\|_* \le \Fbnd$.
Hence $\|L-L^\star\|_F \le \|L\|_F + \|L^\star\|_F \le \|L\|_F + \|L^\star\|_* \le \|L\|_F + \Fbnd$, and $\|L\|_F$ is explicitly computable.

\paragraph{Results}
Figure \ref{CertTests} shows how the distance to the optimal subgradient decreases over time for the convex solver LagQN, the non-convex Split-SPCP with a rank bound that is too strict, and Split-SPCP with a rank bound large enough to reach the global optimum. When the rank bound is too restrictive, Split-SPCP cannot recover the optimal $L$. Measuring the distance to the optimal $L$ reveals this. Figure \ref{CertTests} shows that the distance plateaus far from zero when the rank of $L$ is bounded above by 30, and the rank of the optimizing $L$ is 58. In practice, this measure can be used to indicate that the rank bound should be increased.

\begin{figure}
\includegraphics[width=0.4\linewidth]{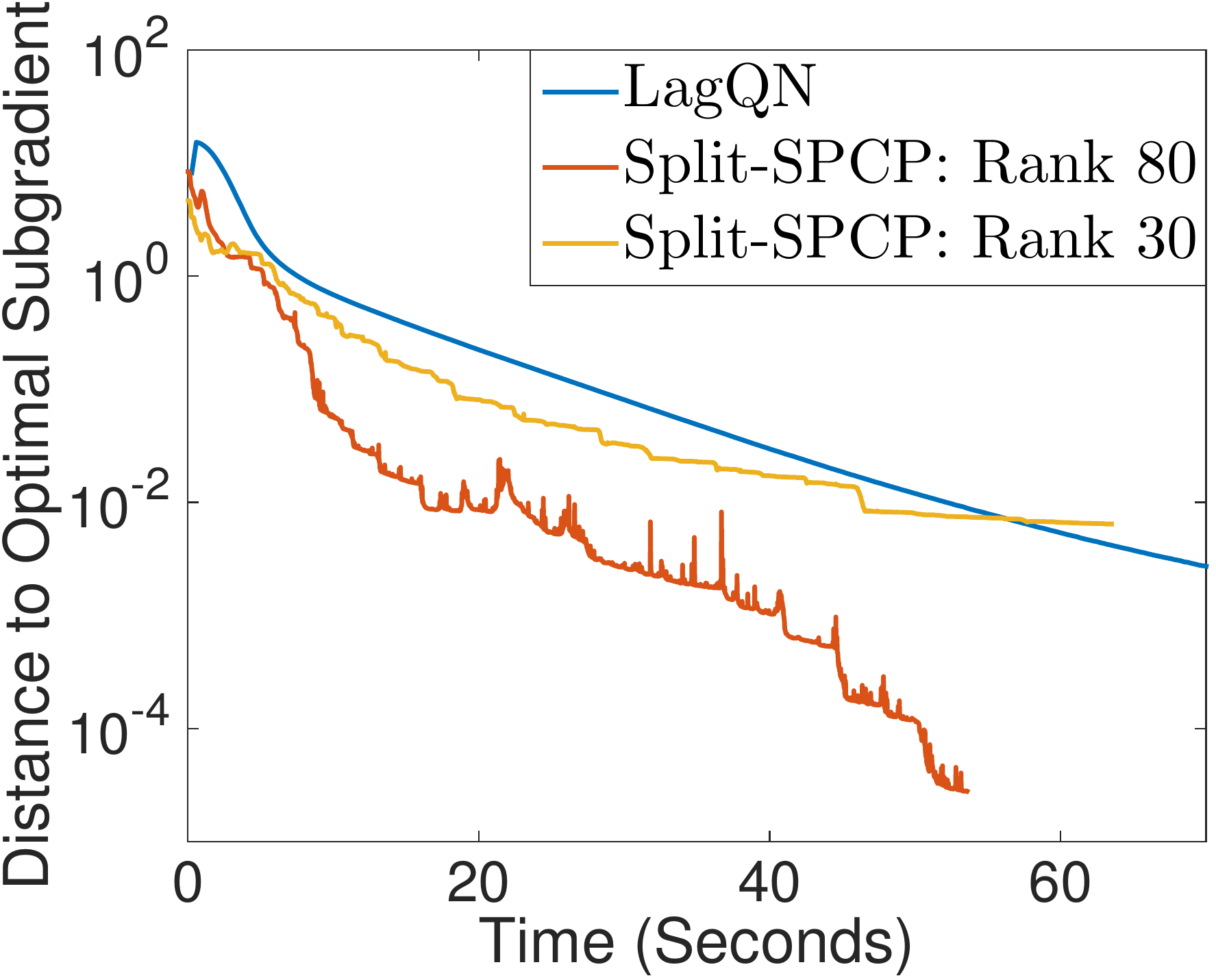}
~
\includegraphics[width=0.4\linewidth]{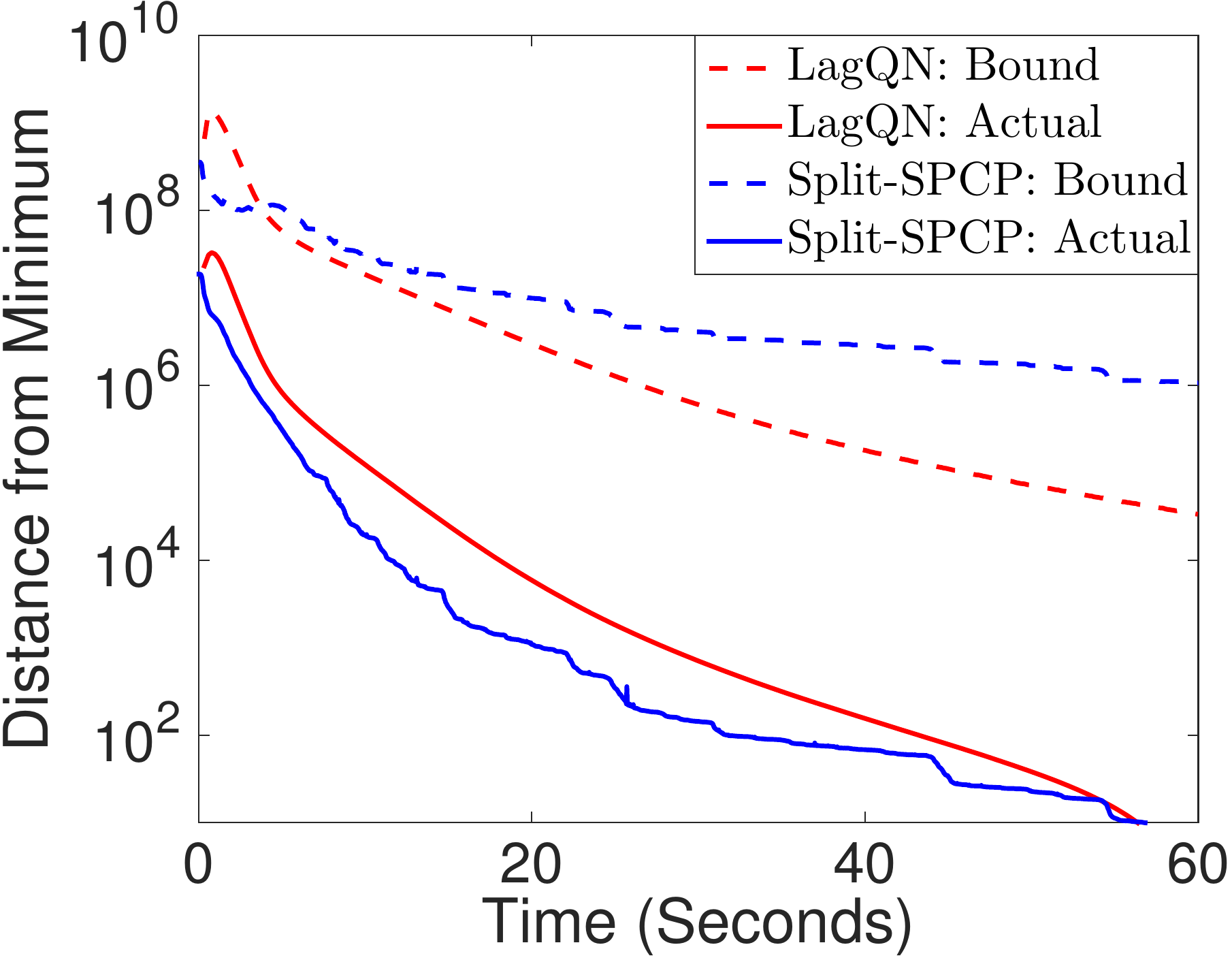}
~~~~
\caption{(Left): Distance from the optimal subgradient for the convex solver LagQN and the non-convex Split-SPCP with different rank bounds. The rank of the optimal $L$ is 58. (Right): Bounded distance from minimal objective value, as given by $\| \Eps \|_F (\|L\|_F + F_{\textnormal{bound}})$, with $F_{\textnormal{bound}}$ given by the objective value at the current iterate. The actual distance from the minimal objective value is shown for comparison. The rank bound for the Split-SPCP test is 80.} 
\label{CertTests}
\end{figure}

\section{Initialization}
\label{Init}

The non-convex formulation~\eqref{eq:smoothNC} is sensitive to the initial point, and requires a bound on the rank of $L$. In this section, we present heuristics for choosing a good initial point and determining an appropriate rank-bound.

\subsection{Dynamically Increasing $k$}

Figure \ref{Rank} demonstrates the sensitivity of the Split-SPCP program to the factor rank $k$. These tests were performed on the surveillance video data described in Section \ref{BurMontEx}, where the true rank of the low-rank component is 58. We see that when $k \le \rank(L)$, Split-SPCP does not converge to the correct solution, but if $k$ is much larger than the rank of the minimizing $L$, then the computation is slowed significantly. 

Because our solver is oblivious to the individual dimensions of $U$ and $V$, columns can be added to both matrices on the fly. Dynamically updating the rank bound can help when~\eqref{eq:smoothNC} is initialized with a $k$ that is too small. 
Suppose a solver reaches some convergence criterion at iteration $i$. To see if this is a solution to~\eqref{eq:genConvex}, we add a single column to $U$ and $V$: 
\begin{equation}
\left[ \begin{array}{cc}
U_i & u
\end{array} \right]
\left[ \begin{array}{cc}
V_i & v
\end{array} \right]^T
= L_i + uv^T,
\end{equation}
and observe whether this rank-one allows for a lower objective value or certificate value.
Dynamically increasing $k$, even aggressively, is more efficient than overestimating $k$ from the start. 

\begin{figure}[h!]
\centering
\includegraphics[width=0.5\linewidth]{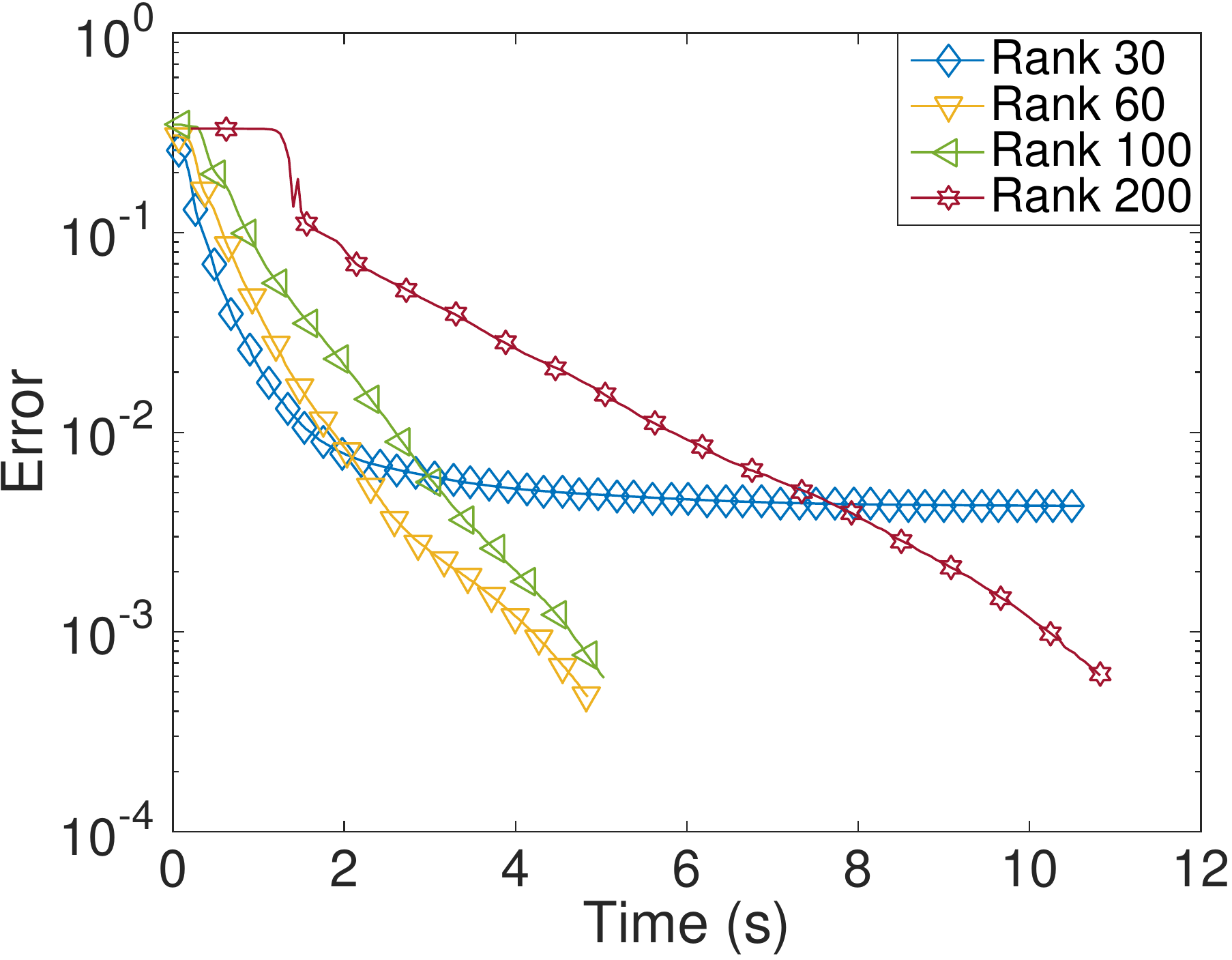}
\caption{Varying the rank bound $k$ in Split-SPCP affects computation time and the accuracy of the final solution. One marker represents 10 iterations.}
\label{Rank}
\end{figure}

\subsection{Choosing $U_0$ and $V_0$}

Choosing an initialization point $(U_0,V_0)$ has received considerable attention in factorized low-rank recovery problems. 
Most of this work has centered around matrix completion using alternating minimization, see, for example, \cite{HardtAltMin,MatCompAltMin,RegForMatComp,NoisyMatComp,GLRM}.
Provably good choices for $U_0$ and $V_0$ are $U_0 = U \Sigma^{\frac{1}{2}}$ and $V_0 = V \Sigma^{\frac{1}{2}}$, where $U$ and $V$ come from the SVD of $\mathcal{A}^* (X)$, where $\mathcal{A}$ is a sampling operator and $\mathcal{A}^*$ is its adjoint. 
\cite{HardtAltMin} showed that these initial points lie in the basin of attraction of globally optimal minima, yielding fast convergence to an optimal solution. 

As mentioned in \cite{GLRM}, it is sometimes not necessary to perform a full SVD to form $U_0,V_0$. Once $k$ is chosen, only a partial SVD is necessary to calculate the first $k$ singular values and vectors, which can be done efficiently using the randomized SVD (rSVD)~\citep{structureRandomness}. Although using the rSVD is significantly faster than a full SVD, the values and vectors it returns are not always accurate, especially when singular values of the underlying data do not decay rapidly.

\begin{figure}[h!]
\centering
\includegraphics[width=0.5\linewidth]{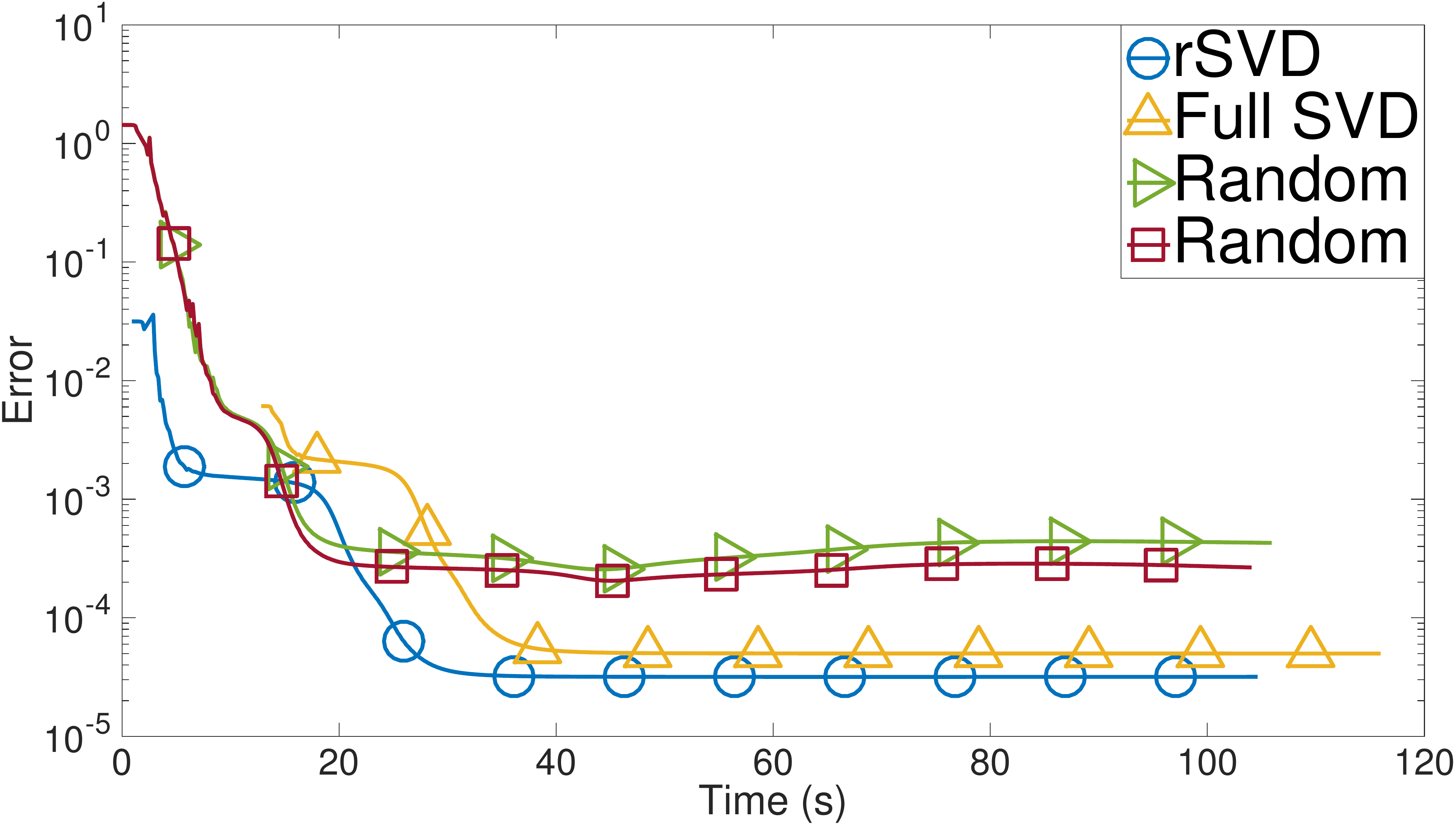}
\caption{The performance of Split-SPCP with various initial points. All tests were run on a 5,000 $\times$ 5,000 matrix $X = L + S$, where $\rank(L) = 1000$ and $\textnormal{sparsity}(S)$ = 9.43\%. The rank bound was $k = 1,\!050$. One marker represents 50 iterations.}
\label{Fig:Init}
\end{figure}

Figure \ref{Fig:Init} shows the performance of Split-SPCP using various initial points.
\textcolor{black}{The tests  were performed on a $5,\!000 \times 5,\!000$ matrix $X = L + S$, where $\rank(L) = 1000$ and $\textnormal{sparsity}(S)$ = 9.43\%. The rank bound was $k = 1,\!050$. All $5,\!000$ singular value and vector triples of $X$ were calculated for the SVD test, and $U_0$ and $V_0$ were formed from the first $k = 1,\!050$ triples. For the rSVD test, only the first $1,\!050$ triples were approximated, $U_0$ and $V_0$ were formed from these.}

Figure \ref{Fig:Init} shows that initializing $U_0$ and $V_0$ with the first 1,050 singular value and vector triples returned by the full SVD yields the smallest initial error, but the extra computational cost is not worth the better start. The rSVD allows quicker convergence to a similar solution. The two random initializations converge to a stationary point that is not globally optimal.

\section{Numerical Experiments}
\label{BurMontEx}

In this section, we present numerical experiments that illustrate the speed of Split-SPCP. Recall that the Split-SPCP program is given by
\begin{equation}
\label{Thm5}
\begin{split}
\min_{U,V}& \quad \frac{\lambda_L}{2} (\|U\|^2_F + \|V\|_F^2) + \varphi(UV^T),
\end{split}
\end{equation}
where
\begin{equation}
\varphi(UV^T) = \min_S \frac{1}{2} \| UV^T + S - X \|_F^2 + \lambda_S \| S \|_1.
\end{equation}
Since the objective of Split-SPCP is smooth, any first-order method can be used to solve it. Also, because the communication-heavy SVD step is no longer a limitation, we are motivated to choose a solver and an implementation that are most suited for the GPU.

\subsection{Implementation Details}
\label{subsec:ImpDets}

To find a solver that fits our Split-SPCP program well, we compare the performance of several first-order methods as they solve a logistic regression problem on the GPU. Each of the solvers we use in this test are modified from \cite{SchmidtCode}, using MATLAB's Parallel Computing Toolbox, which in turn uses MAGMA libraries \citep{Magma} to run on the GPU. For all GPU computation, we use a Tesla K40c GPU card with thread block sizes $[1024, 1024, 64]$, grid sizes $[2.15 \times 10^9, 6.6 \times 10^4, 6.6 \times 10^4]$, a clock rate of 745 MHz, and 12.0 GB of total memory. For all of the tests run on the CPU in this section, we compute on an Intel Xeon CPU E5-2630 version 3, using a 64-bit architecture and a clock rate of 2.40GHz. Our version of MATLAB is R2016b.

The time it takes for various solvers to minimize a logistic loss with a Tikhonov regularization term for $10^4$ labels is shown in Table \ref{Table:LogLoss} below.
We measure performance in ``computation time'', the time it takes for each solver to come within $10^{-8}$ of the optimal objective value.

\begin{figure}[h!]
\centering
\begin{tabular}{c c}
\toprule
Solver &  Normalized Computation Time \\ 
\midrule
Cyclic Steepest Descent & 3.46 \\ 
Barzilai-Borwein & 1.24 \\ 
Conjugate Gradient & 1.44 \\ 
Scaled Conjugate Gradient & 1.53 \\ 
Preconditioned Conjugate Gradient & 3.20 \\ 
{\bf L-BFGS (10 iterations in memory)} & {\bf 1} \\ 
L-BFGS (50 iterations in memory) & 1.82 \\ 
BFGS & 2.39 \\ 
Hessian-Free Newton's Method & 3.05 \\ 
\bottomrule
\end{tabular}
\captionof{table}{First-order methods solving logistic loss program with $10^4$ labels on the GPU. All times are reported as a ratio with respect to the fastest solver (L-BFGS with 10 iterations in memory). Time is recorded when the solution is within $10^{-8}$ of the optimal value.}
\label{Table:LogLoss}
\end{figure}

\vspace{7mm}

\par

\indent We see from our analysis that the L-BFGS solver, storing a small number of iterations in memory, yields the best performance. Of course, the relative performance of different solvers on the GPU depends on the stopping tolerance as well as the specific problem, but there is further evidence in the literature supporting L-BFGS as a competitively efficient solver \citep{WrightBook}.

\subsection{SPCP for Background Subtraction}
\label{subsec:BackSubtract}

In \cite{ThierryVideoReview} and \cite{Sobral}, the authors provide a review of the fastest algorithms in the family of robust PCA and principal component pursuit for background subtraction. We choose some of the fastest of these to compare to our Split-SPCP algorithm. Each of these methods runs faster on the GPU than the CPU, so we run all of them on the GPU for our comparisons (using MATLAB's Parallel Computing Toolbox). We find that our non-convex solver is particularly well-suited for the GPU, and {\bf outperforms all other solvers in almost every case.}

For our background subtraction tests, we use the escalator \textcolor{black}{surveillance video} provided by \cite{LiData}. 
We want to identify the people in the video while ignoring the moving escalators \textcolor{black}{and the stationary background}. 
This problem is particularly difficult for many low-rank recovery algorithms, such as PCA, because the motion of the escalator is a confounder. 
SPCP is less sensitive to outliers, so it can overcome this challenge.

\begin{figure}
\begin{floatrow}
\ffigbox{%
\includegraphics[width=0.85\linewidth]{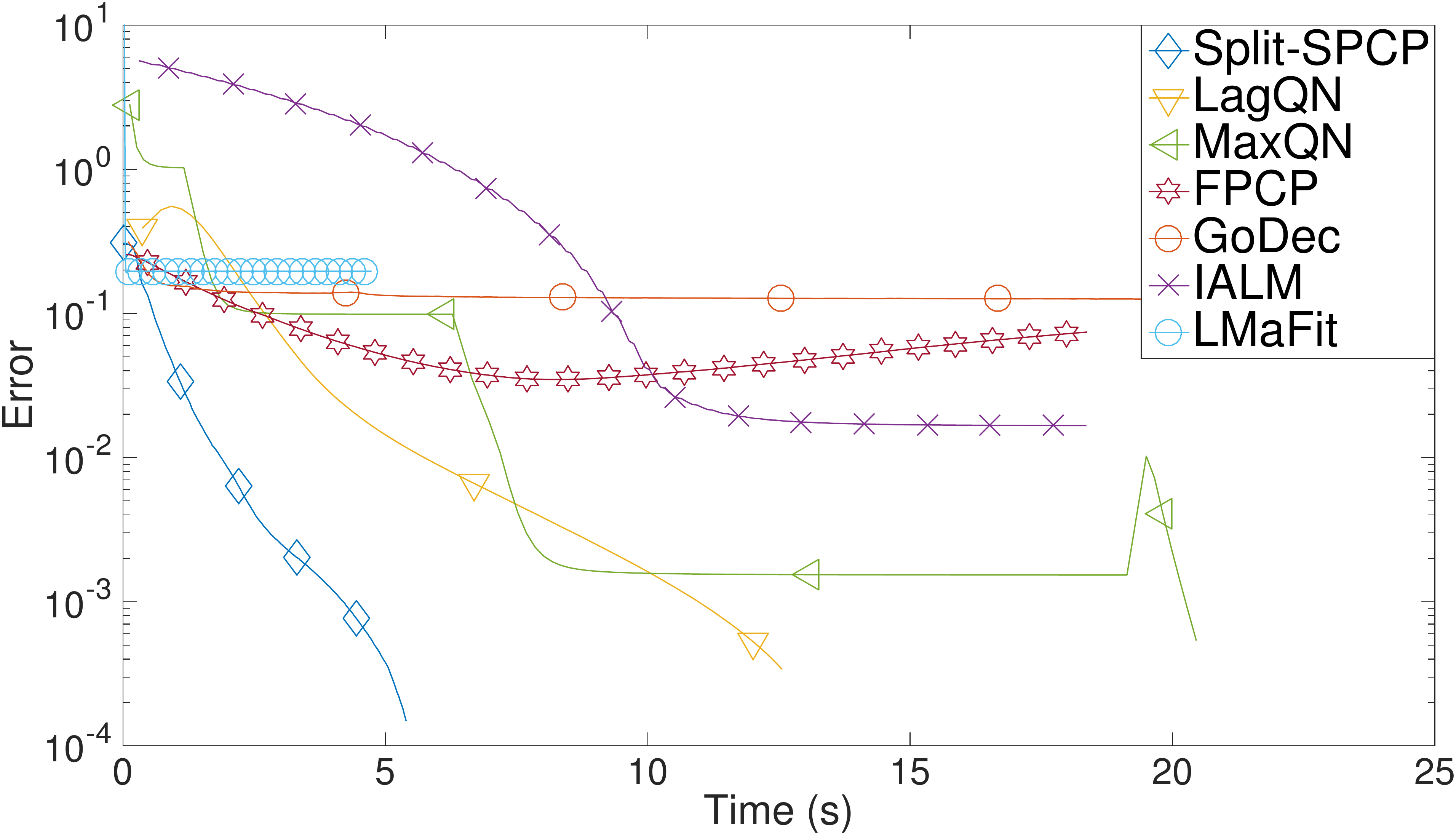}}
{%
\caption{Comparing SPCP solvers for background subtraction. One marker corresponds to 40 iterations.}
\label{BackSub}}
\capbtabbox{%
\resizebox{0.48\textwidth}{!}{%
\begin{tabular}{C{5.7cm} C{4.7cm}}
Algorithm & Reference \\ \hline \\
{\bf Split-SPCP} & {\bf This Work} \\ \vspace*{1mm}
Lagrangian Quasi-Newton (LagQN) & \cite{aravkin2014variational} \\ \vspace*{1mm}
Max Quasi-Newton (MaxQN) & \cite{aravkin2014variational} \\ \vspace*{1mm}
Fast PCP (FPCP) & \cite{FPCP} \\ \vspace*{1mm}
Go Decomposition (GoDec) & \cite{GoDec} \\ \vspace*{1mm}
Inexact ALM (IALM) & \cite{IALM} \\ \vspace*{1mm}
LMaFit (LaMaFit) & \cite{LMaFit} \\ 
\vspace*{3mm}
\end{tabular}}
}{%
  \caption{References for the algorithms used in testing. Among these are the fastest models for background subtraction, as determined in \cite{ThierryVideoReview} and \cite{Sobral}.}%
}
\end{floatrow}
\end{figure}

To find a reference solution, we hand-tune the parameters $\lambda_L$ and $\lambda_S$ in the (convex) quasi-Newton Lagrangian SPCP algorithm (LagQN) until we find a qualitatively accurate decomposition. This process produces a reference low-rank component $L_{ref}$ and sparse component $S_{ref}$. The $L_{ref}$ we choose has rank 58, and $S_{ref}$ is 58.03\% sparse. 
The optimal parameters for both Split-SPCP and LagQN are $\lambda_L = 115$ and $\lambda_S = 0.825$. 
We tune the parameters in the other solvers to recover this solution as closely as possible. 
We measure error as the normalized difference between the objective and the objective at the reference solution. 
Since the solvers minimize different objectives, we calculate the Split-SPCP objective value at each iteration for every algorithm, and we do not include these calculations in our time measurements. 
To initialize Split-SPCP, we use the first 100 singular values and vectors from the randomized SVD of the data matrix.

Several of the algorithms in Figure \ref{BackSub} do not converge to the same solution, despite considerable effort in parameter tuning. 
These algorithms were designed to quickly find approximate solutions to the SPCP problem, and might suffer from the large amount of noise present in the data.  
The approximate solutions recovered by these algorithms are qualitatively different from the Split-SPCP solution, see Figure~\ref{Pix}. 
We also find that a lower objective value generally corresponds to a qualitatively superior solution.  
\begin{figure}[h!]
\centering
\begin{subfigure}[b]{0.49\linewidth}\includegraphics[width=0.9\linewidth]{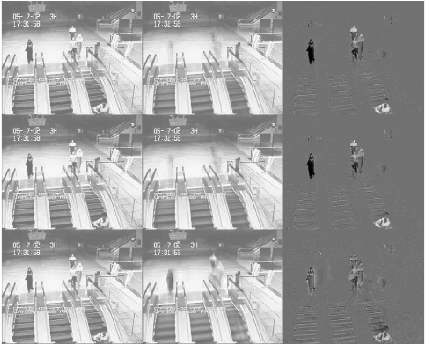}
\caption{X,L, and S matrices found by (from top to bottom) Split-SPCP, LagQN, and GoDec.}
\end{subfigure}
~
\begin{subfigure}[b]{0.49\textwidth}\includegraphics[width=0.9\linewidth]{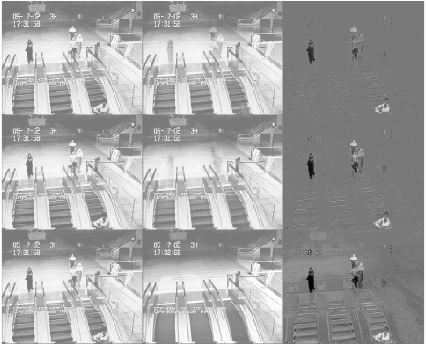}
\caption{X,L, and S matrices found by (from top to bottom) FPCP, IALM, and LMaFit.}
\end{subfigure}
\caption{Background subtraction using various SPCP solvers on surveillance video data from \cite{LiData} (frame 10 is shown). We see that Split-SPCP, LagQN, and IALM best locate the people while ignoring the escalators.}
\label{Pix}
\end{figure}

To quantitatively measure the quality of the solutions, we use the {\it Corrected Akaike Information Criterion}, or AIC$_{\textnormal{\large c}}$. The AIC$_{\textnormal{\large c}}$ measures the fit of a statistical model to a certain data set. Given a model with $p$ parameters and a log-likelihood function $\ell$, the value of the AIC$_{\textnormal{\large c}}$ is
\[\textnormal{AIC}_{\textnormal{\large c}} =  2 (p - \log(\ell_{\max})) + \frac{2p(p+1)}{m \cdot n-p-1}, \]
where $m \cdot n$ is the size of the data set and $\ell_{\max}$ is the maximum of $\ell$. The preferred statistical model is the one that minimizes the AIC$_{\textnormal{\large c}}$. The AIC$_{\textnormal{\large c}}$ favors models that maximize the likelihood function and penalizes complex models with many parameters, guarding against overfitting. Using AIC$_{\textnormal{\large c}}$ as a measure of quality
avoids both overemphasis of objective values and the inherent ambiguity of visual comparisons.

To compute the AIC$_{\textnormal{\large c}}$ value, we must formulate SPCP as a statistical model. It is well-known that the least-squares loss term assumes that the data is of the form $X = L+S+Z$, where the entries of $Z$ are i.i.d. Gaussian random variables with $\mu = 0$ and variance estimated using the sample variance, $\hat{\sigma}^2 = \frac{\| X \|^2_F}{m \cdot n}$. Similarly, the $\ell_1$-regularizer assumes that the entries of $S$ are drawn iid from the Laplace distribution with mean 0 and variance estimated as $\hat{b} = \frac{\|S\|_1}{m \cdot n}$. The nuclear norm of $L$ is the $\ell_1$-norm of its singular values, so its corresponding prior assumes that the singular values of $L$ follow the Laplace distribution. The log-likelihood function is then
\begin{multline}
\ell(L,S,X) = -\left( \frac{m \cdot n}{2} \right) \log(2 \pi \sigma^2) - \frac{\|L+S-X\|_F^2}{2 \sigma^2} -\left( m \cdot n \right) \log(2 b) - \frac{\|S\|_1}{2 b} - \rank(L) \log(2 b_*) - \frac{\|L\|_*}{2 b_*},
\end{multline}
where $\sigma^2,b$ and $b_*$ are computed according to their respective estimator. We must also define the number of parameters of this model, which is equal to its degrees of freedom. Each term in SPCP provides the following degrees of freedom:
\begin{align*}
\rank(L) = k \quad &\rightarrow  \quad k(m+n-k) \textnormal{  degrees of freedom}, \\
\|S\|_1 \quad &\rightarrow \quad nnz(S) \textnormal{  degrees of freedom}, \\
\tfrac{1}{2} \|L+S-X\|_F^2 \quad &\rightarrow \quad \left( \tfrac{\|L+S-X\|_F^2}{\|X\|_F^2} \right) (m \cdot n) \textnormal{    degrees of freedom}.
\end{align*}
Note that $nnz(S)$ counts the number of non-zero entries in $S$. Finding the degrees of freedom in a rank-$k$ matrix or a sparse matrix are both standard calculations. For the loss term, we use the residual effective degrees of freedom as an estimate of the flexibility that it introduces. This is detailed further in \cite{LoaderRegression}. The number of parameters $p$ is equal to the total degrees of freedom.

The AIC$_{\textnormal{\large c}}$ values for the solutions shown in Figure \ref{Pix} are listed in Table \ref{Table:DoF}. The AIC$_{\textnormal{\large c}}$ value for the reference solution is listed under ``oracle.'' Recall that the reference solution was found using the LagQN solver with a tight tolerance. All the other values in the table correspond to solutions that meet comparable tolerances. Also, since the IALM model does not use an $\ell_1$-norm regularizer, many of the values in the returned $S$ matrix were small but not exactly zero. To make accurate comparisons, we applied the shrinkage operator to the $S$ matrix returned by IALM to set small values equal to zero before calculating the degrees of freedom. The values for IALM and LMaFit are high because these solvers are not generally robust to large amounts of noise.
\begin{figure}[h!]
\centering
\begin{tabular}{c c}
\centering
Solver &  AIC$_{\textnormal{\large c}}$ ($\times 10^6$) \\ \hline
{\it Oracle } & ${\it 7.37}$ \\ 
{\bf Split-SPCP} & ${\bf 7.64}$ \\ 
GoDec & 10.95 \\
LagQN & 11.30 \\
MaxQN & 11.64 \\ 
FPCP & 15.70 \\ 
IALM & $2.43 \times 10^7$ \\ 
LMaFit & $3.99 \times 10^9$ \\ 
\end{tabular}
\captionof{table}{Degrees of freedom in solutions returned by various solvers.} 
\label{Table:DoF}
\end{figure}
With the AIC$_{\textnormal{\large c}}$ metric as well, we see that Split-SPCP discovers the best solution, and in a much shorter time compared to the other algorithms.

Split-SPCP has an advantage because it parallelizes well on GPU architectures. This is due to the relative simplicity of the L-BFGS algorithm, which avoids complex linear algebraic decompositions that require heavy communication, such as the QR and SVD (see also~\cite{ParallelLBFGS}). Figure \ref{fig:GPU} shows that although Split-SPCP on the CPU is slower than LagQN on the CPU, 
Split-SPCP enjoys enormous speedup when it is implemented on the GPU.

\begin{figure}[h!]
\centering
\includegraphics[width=0.6\linewidth]{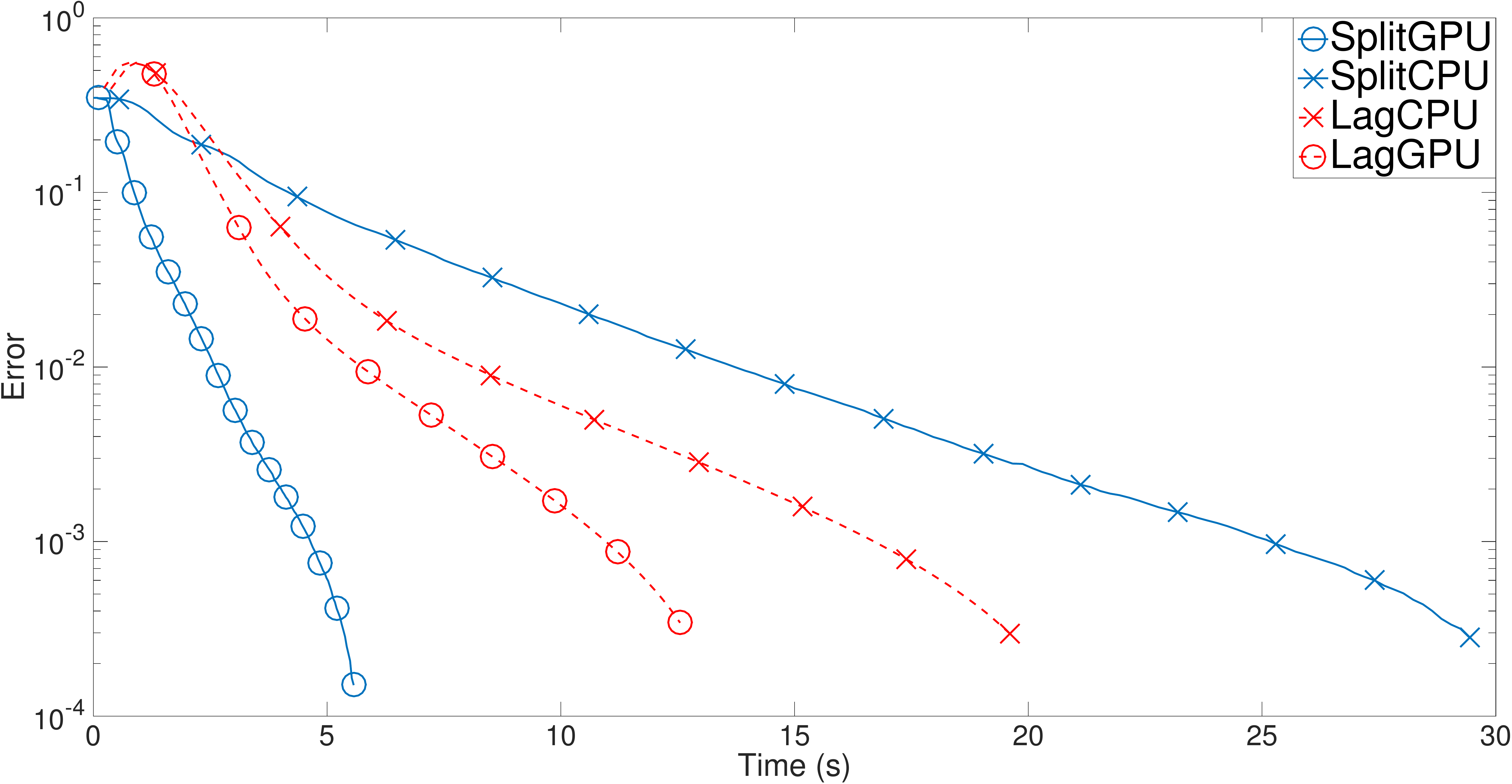}
\caption{Comparing the performance of Split-SPCP and LagQN on the CPU and the GPU. Although Split-SPCP is slower than the similar convex solver on the CPU, it sees more acceleration on the GPU because it avoids QR and SVD computations. Tests were performed on the synthetic data described in Section \ref{SynDat}}
\label{fig:GPU}
\end{figure}

\subsection{Synthetic Data}
\label{SynDat}

Split-SPCP demonstrates similar performance on synthetic data. For our synthetic-data test, we create two $1,\!000 \times 150$ random matrices with each entry drawn from independently from the univariate normal distribution, and define our low-rank reference matrix, $L_{ref}$, as the product of these two matrices. To form the sparse reference matrix, $S_{ref}$, we fill 50\% of a $1,\!000 \times 1,\!000$ matrix with numbers drawn independently from the univariate normal distribution, and the other entries we set equal to zero. $X$ is then the sum $L_{ref}+S_{ref}$ with added white Gaussian noise, so that $\frac{\| L_{ref}+S_{ref}-X \|_F}{\|X\|_F} = 8.12 \times 10^{-5}$. Matrices with these characteristics are often encountered in video processing applications.

We initialize Split-SPCP with the first 200 singular value and vector triples of $X$, \textcolor{black}{so that $k$ is larger than the rank of $L$}. These triples are found using the rSVD. The tuning parameters are set to $\lambda_L = 2.95$ and $\lambda_S = 0.1$. As before, we measure performance based on the normalized difference from the true Split-SPCP objective value. The results are shown in Figure \ref{Synth}.

\begin{figure}
\centering
\includegraphics[width=0.6\linewidth]{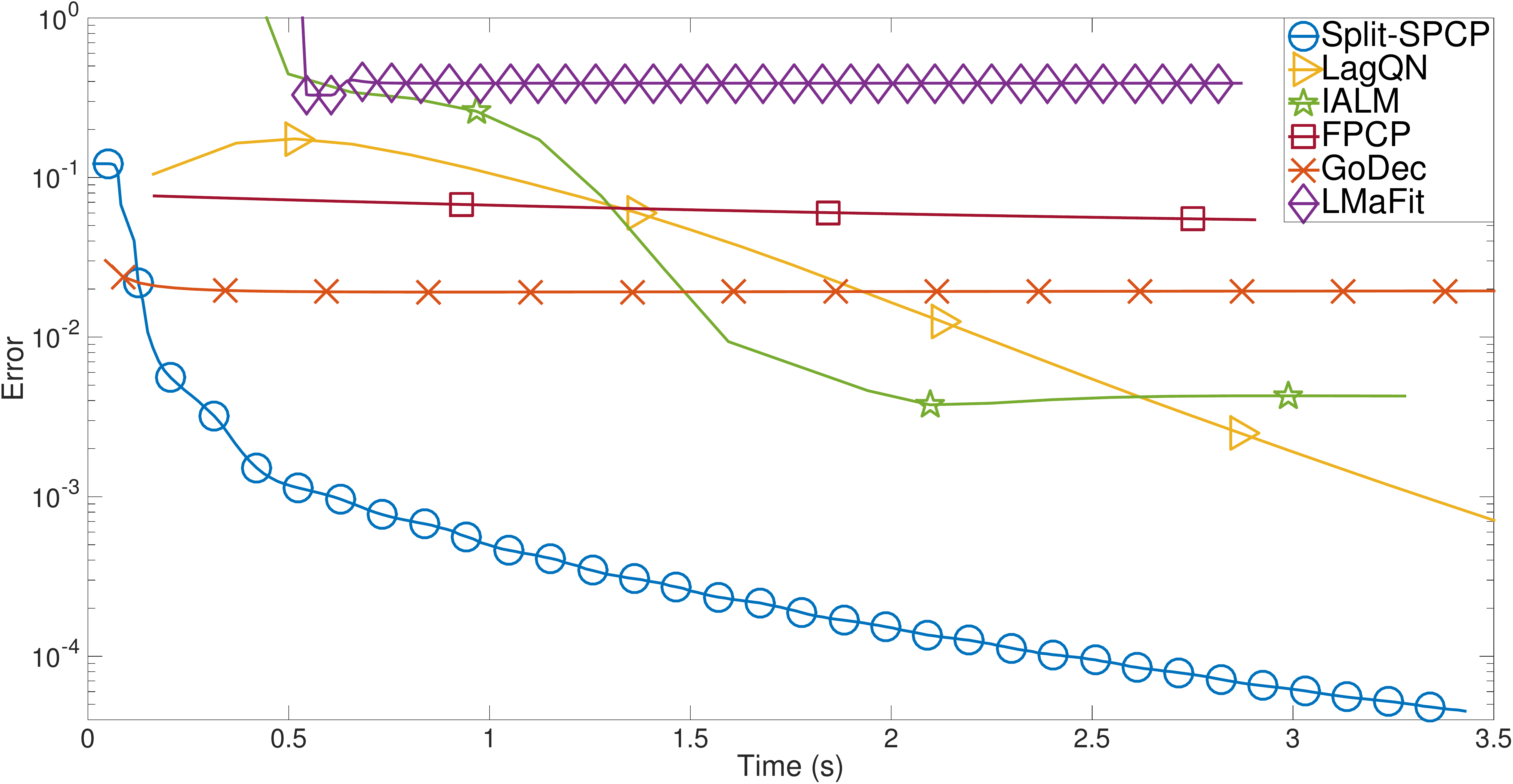}
\caption{Performance of SPCP solvers on the synthetic data described in Section \ref{SynDat}, on the GPU. One marker represents six iterations.}
\label{Synth}
\end{figure}

We see that FPCP and GoDec discover approximate solutions quickly, but the approximations are not within $10^{-2}$ of the true objective. LMaFit exhibits poor performance. This could be due to the fact that $S$ is about 50\% sparse, and LMaFit struggles with problems that have a large sparse component~\citep{LMaFit}. Split-SPCP significantly outperforms all solvers in speed, and outperforms all solvers except for LagQN in accuracy. LagQN and Split-SPCP converge to solutions with the same objective value, albeit Split-SPCP converges much more quickly.

\subsection{Dimensional Scaling and Real-Time Video Processing}
\label{DimScale}

Background subtraction in video is a quintessential application for low-rank recovery algorithms, but these algorithms are generally unable to perform real-time background subtraction due to their slow implementation. The experiments in Section \ref{subsec:BackSubtract} show that Split-SPCP is faster and more accurate than the methods reviewed in \cite{ThierryVideoReview} and \cite{Sobral} for background subtraction, and in this section, we use Split-SPCP for the real-time processing of high-resolution videos with standard frame rates. 
We also explore how Split-SPCP scales as the resolution and frame rate increases.

To measure the performance of Split-SPCP on more diverse video datasets, we use videos from the Background Model Challenge \citep{BMC}. The experiments shown in Figure \ref{Fig:col} consider a video with $240 \times 320$ pixels per frame and 32,964 frames, which is substantially larger than the low-resolution escalator video of Section \ref{subsec:BackSubtract} with $130 \times 160$ pixels per frame and only 200 frames. 
We unfold the three-dimensional video data into a matrix with each column containing one frame, and we partition the matrix columnwise into blocks comprising a fixed number of frames. 
With the rank bound fixed at 60, we record the computation time required to perform 200 iterations of Split-SPCP on one block as a function of the block size.
Larger block sizes yield better solutions, but they also require more time for SPCP to converge.
We find that after 200 iterations, Split-SPCP converges to within $10^{-3}$ of the optimal objective value for all block sizes, 
and the solution is qualitatively similar to the optimal solution. 

Figure \ref{Fig:col} shows how Split-SPCP scales as the video resolution and the number of frames per block increase. 
These two quantities correspond to the number of rows and the number of columns of each block. 
We increase the resolution by rescaling each frame so that the aspect ratio stays close to constant, 
and we use interpolation to impute the missing pixel values. 
It is easy to discern the $\mathcal{O}(n)$ scaling as the number of frames (columns) is increased. 
Increasing the resolution affects the computation time linearly, but beyond a ``critical resolution'' of $332 \times 446$ pixels per frame, 
the slope of the linear scaling undergoes a shift in regime. This linear scaling is expected because the bottleneck of Split-SPCP is the matrix-matrix multiply $UV^T$, 
which requires $\mathcal{O}(mnk)$ operations, so with $k$ fixed, the computation time of Split-SPCP grows linearly with the number of data points. 
The results of the background subtraction are shown in Figure \ref{BackSubEx}.

\begin{figure}
\centering
\includegraphics[width = 0.4\linewidth]{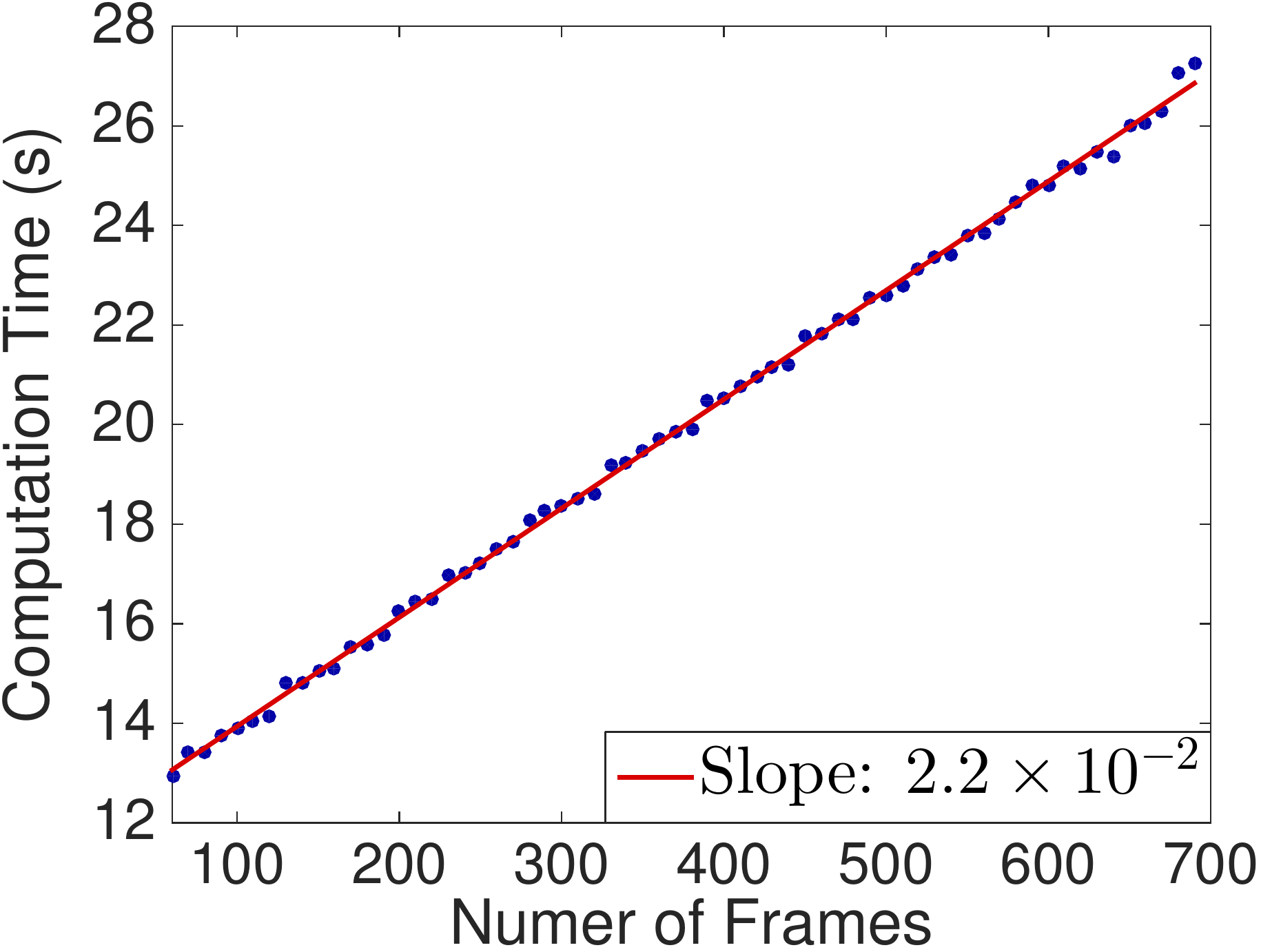}
\includegraphics[width = 0.4\linewidth]{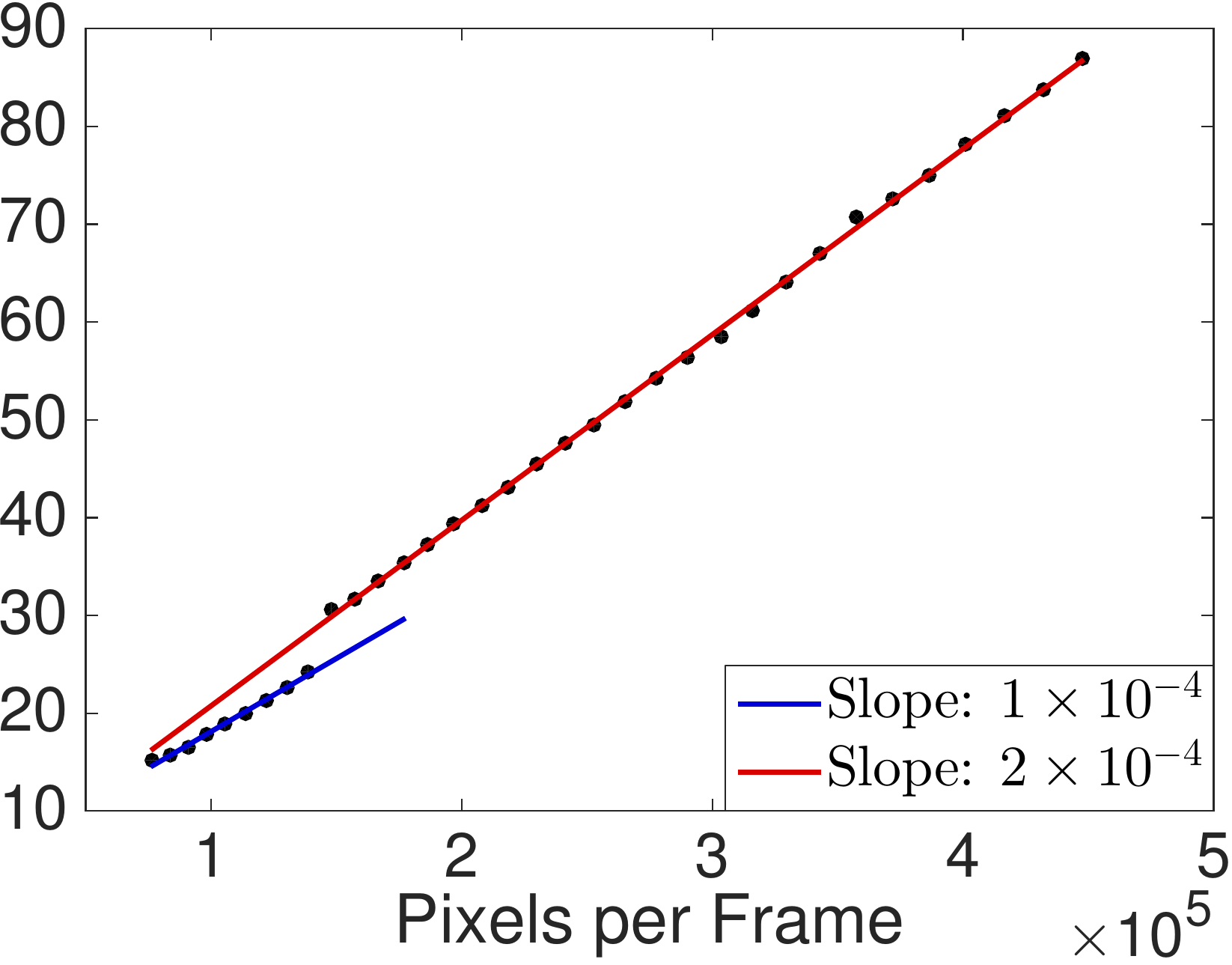}
\caption{The scaling of Split-SPCP as the number of columns (or frames) increases (left) and as the number of rows (or pixels per frame) increases (right). Each point is the average of twenty trials. For the column-scaling tests, the resolution is fixed at $240 \times 320$ pixels per frame, and for the row-scaling tests, 
the number of frames in a block is fixed at 100. The tuning parameters are set to $\lambda_L = 200, \lambda_S=5$.}
\label{Fig:col}
\end{figure}

These experiments suggest conditions under which Split-SPCP can decompose a video in real-time. Assuming a resolution of $240 \times 320$ pixels per frame and a frame rate of 24 frames-per-second, which is standard in many applications of video processing, the video must be partitioned blocks of about 600 frames or more. With this partitioning, the algorithm will have finished decomposing the given block before the next block is recorded. This number increases linearly with resolution, following the trend line given in the rightmost plot of Figure \ref{Fig:col}. Following these guidelines, Split-SPCP can decompose the videos from \cite{BMC} in real-time without sacrificing quality. An example is shown in Figure \ref{BackSubEx}.

\begin{figure}[h!]
\includegraphics[width=0.3\linewidth]{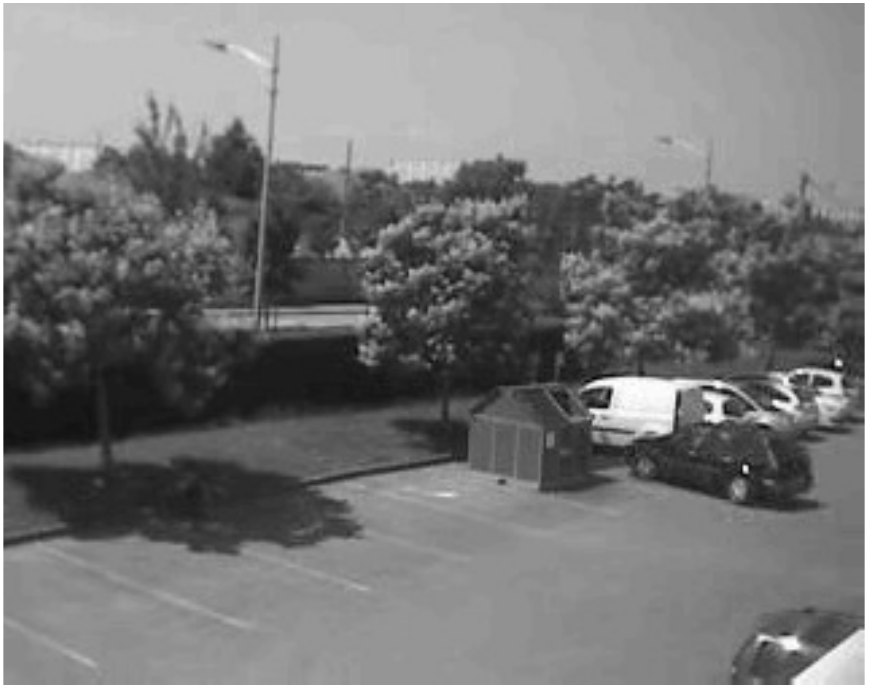}
\includegraphics[width=0.3\linewidth]{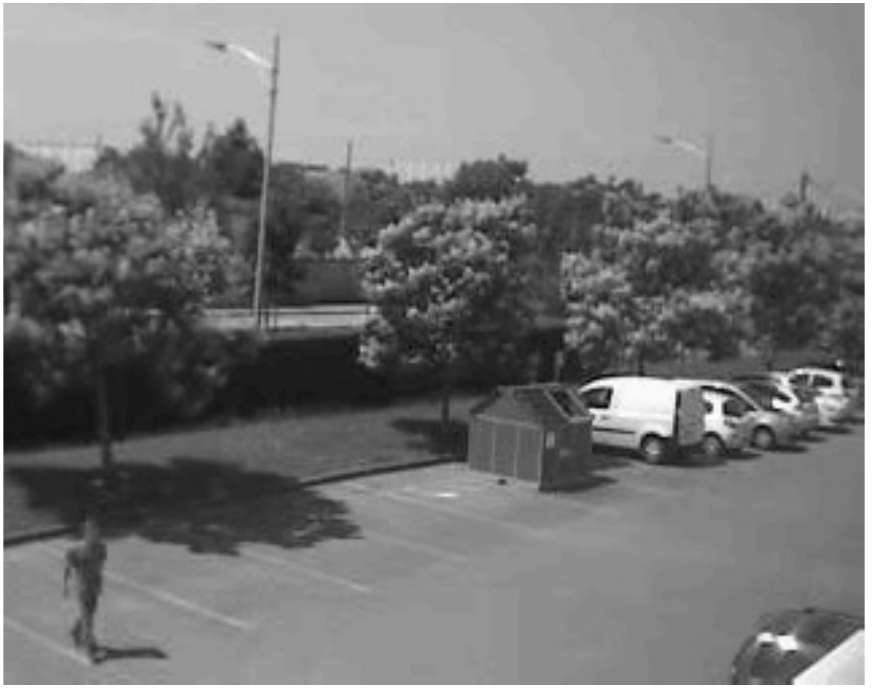}
\includegraphics[width=0.3\linewidth]{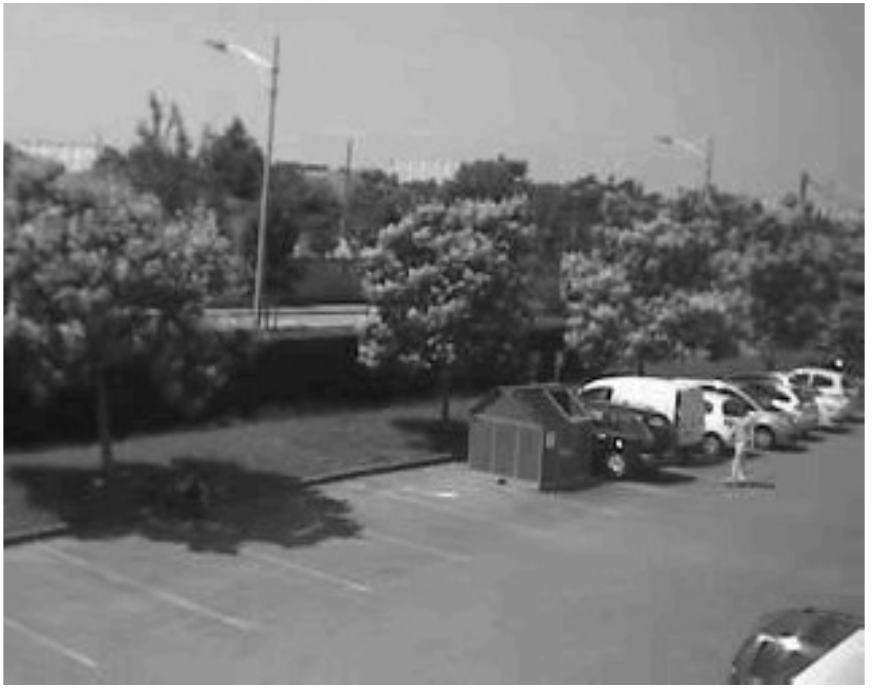}
\includegraphics[width=0.3\linewidth]{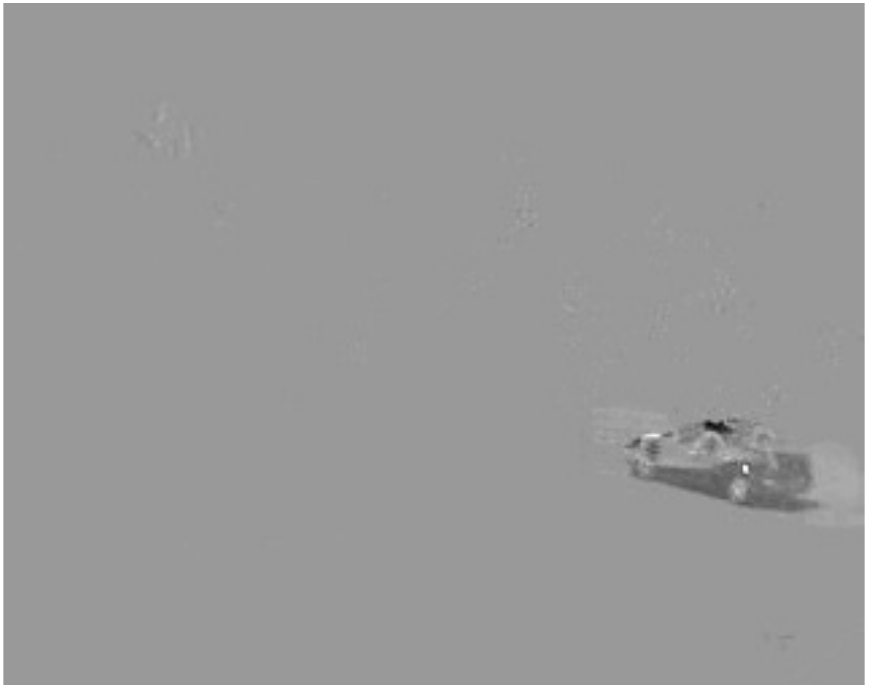}
\includegraphics[width=0.3\linewidth]{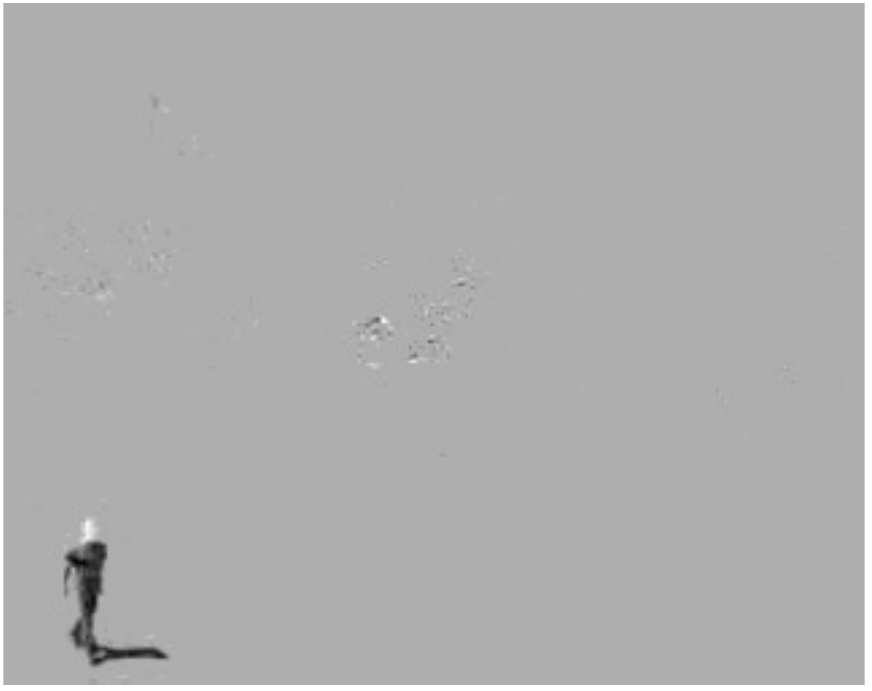}
\includegraphics[width=0.3\linewidth]{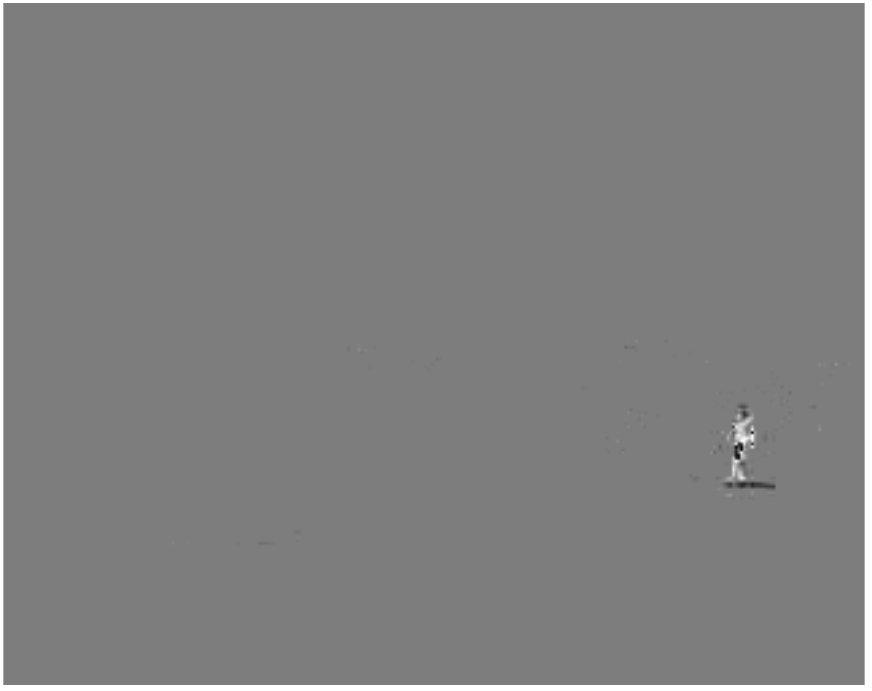}
\caption{Real-time background subtraction using Split-SPCP. Split-SPCP correctly identifies (left) a car in the middle-ground, (center) a person in the foreground, and (right) a person in the background. The resolution is $240 \times 320$ pixels per frame, 600 frames were decomposed at one time, and only 200 iterations were allowed.}
\label{BackSubEx}
\end{figure}

\section{A Comparison with Accelerated-SVD Algorithms}
\label{sec:fastSVDcompare}

There are several algorithms for low-rank matrix recovery that focus on accelerating the SVD step. In this section, we explore two such approaches: a communication-avoiding rSVD and Frank-Wolfe with marginalization. We compare both to our SVD-free approach. Both of these methods preserve convexity, but numerical experiments suggest that our non-convex formulation is superior.

\subsection{Communication-Avoiding rSVD}
\label{tsqr}

Many problems in data analysis involve matrices that are ``tall-skinny''. For example, the surveillance video we consider in Section \ref{BurMontEx} has dimensions $20,\!800 \times 200$, and the psychometric data shown in Figure \ref{fig:fMRI} produces a matrix with 230-times more rows than columns. 
The prevalence of this tall-skinny structure is due in part to the fact that multi-dimensional data sets must be ``unwrapped'' into a two-dimensional matrix before matrix-based algorithms can analyze the data, and the canonical unwrapping of a square data set with more than two dimensions leads to a highly rectangular matrix. Tall-skinny matrices pose a challenge for matrix decompositions, 
especially on computational architectures where communication is costly (such as a GPU)~\citep{TSQR}. 

The rSVD is often used to accelerate low-rank recovery models by projecting a data matrix of size $m \times n$ onto a low-dimensional subspace, and performing computations with the resulting smaller matrix of size $m \times k$, with $k \ll n$.\footnote{Further details of the rSVD algorithms can be found in \cite{halko2011finding}} The rSVD offers improved performance on the CPU, but it exacerbates the tall-skinny problem, making parallelization difficult even when the input matrix is square.

The effect of this increased communication is notable: on the CPU, the rSVD step in LagQN accounts for about 55\% of the algorithm's total running time, but when this solver is run on the GPU, the rSVD accounts for about 90\% of SPCP's total running time, as shown in Figure \ref{fig:PsychSPCP}. 
The TSQR algorithm of \cite{TSQR} can provide a convex option for accelerating low-rank recovery models. 
While TSQR has been used to accelerate the SVD step in RPCA for tall-skinny matrices \cite{AndersonTSQR}, 
we extend this idea to the rSVD algorithm on general matrices, and we compare this communication-avoiding approach to Split-SPCP for analyzing fMRI brain scan data.

For the experiments in this section, we implement the TSQR algorithm on a single GPU card as well as multiple CPUs. The GPU card and CPUs are the same models described in Section \ref{subsec:ImpDets}. We use 32 CPU cores: two sockets, eight cores per socket, and two threads per core. To run QR decompositions in parallel on the GPU, we use the batched QR algorithm of MAGMA version 1.7.0 \cite{Magma}, compiled as a MEX file using GCC for Red Hat version 4.8.5-11. We wrote our own implementation of TSQR on multiple CPUs using MATLAB's Parallel Computing Toolbox. The initial number of blocks in our TSQR implementation is 32, corresponding to the 32 CPU cores we have available. Specific details on the TSQR algorithm can be found in \cite{TSQR}.

fMRI brain scan data sets are inherently four-dimensional, so unwrapping the dataset into a two-dimensional array creates a tall-skinny matrix with hundreds of times more rows than columns. The enormity of these data sets makes low-rank recovery models intractable without parallelization. Figure \ref{fig:PsychSPCP} shows the total time spent performing SPCP on one of these data sets, as well as the total time spent performing rSVDs, performing the QR step of the rSVD, and moving data (labeled as the ``overhead'' cost). The results of this test are included in Figures \ref{fig:fMRI} and \ref{GlobalSig}.

\begin{figure}
\centering
\includegraphics[width=0.6\linewidth]{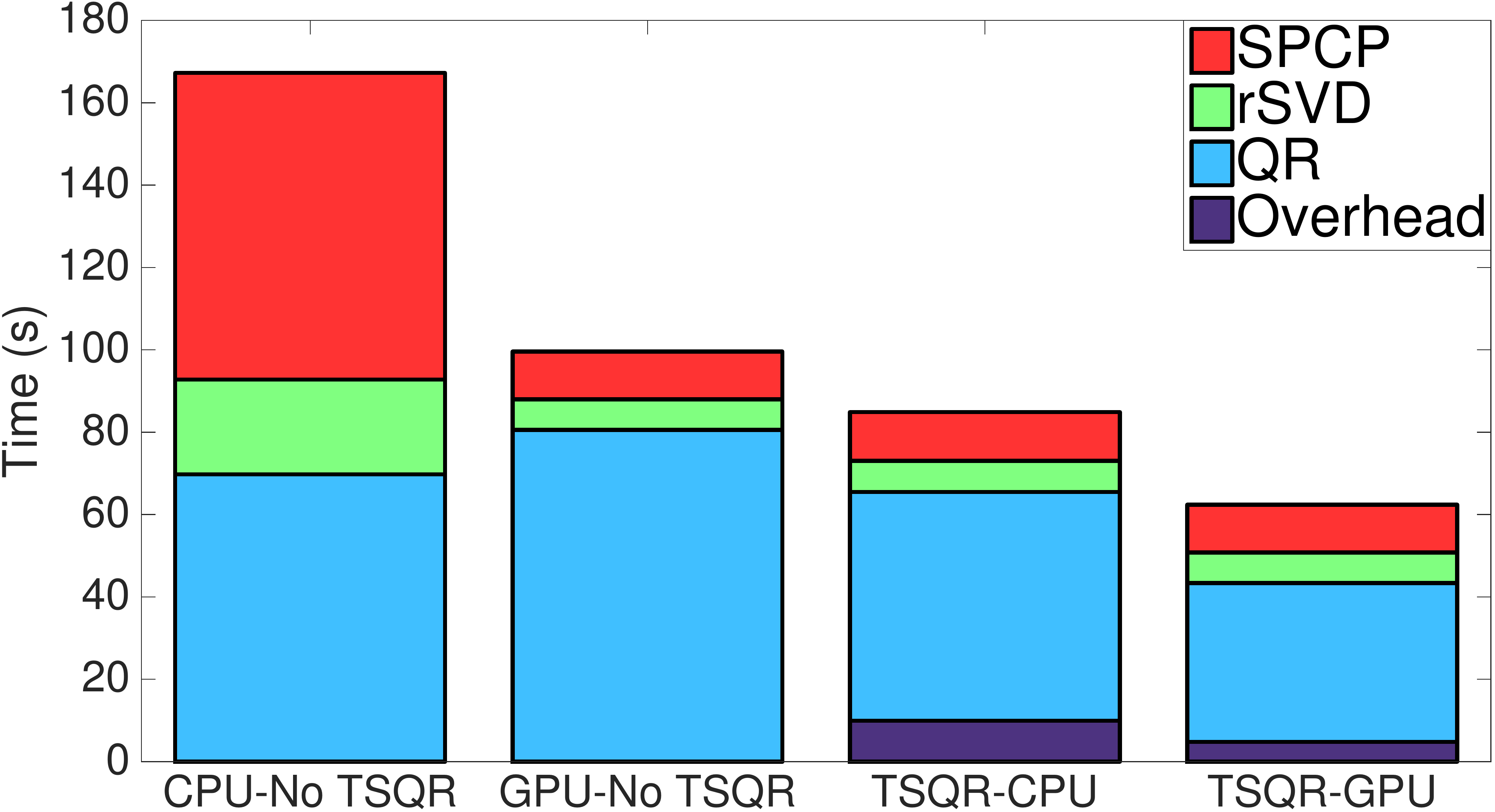}
\caption{The time spent performing subprocesses of the SPCP algorithm on fMRI brain scan data. Lower blocks are subprocesses of upper blocks, so the times are cumulative. The test ``TSQR-CPU'' uses a hybridized architecture, running TSQR on multiple CPU's but running the rest of SPCP on the GPU. Both tests ``GPU-No TSQR'' and ``TSQR-GPU'' were run entirely on the GPU.}
\label{fig:PsychSPCP}
\end{figure}

\begin{figure}[h!]
\centering
\includegraphics[width=0.3\linewidth]{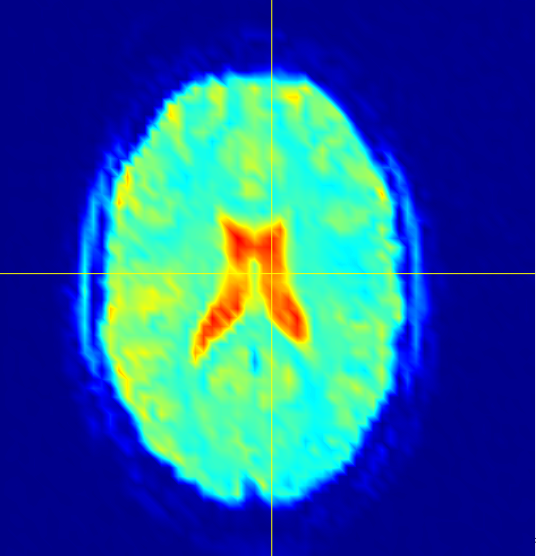}
\includegraphics[width=0.3\linewidth]{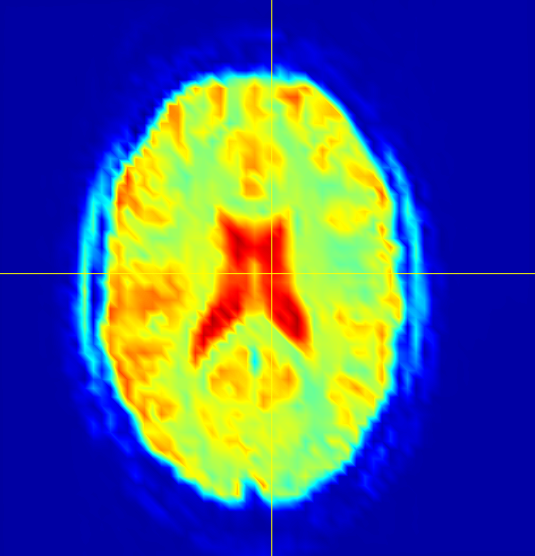}
\includegraphics[width=0.3\linewidth]{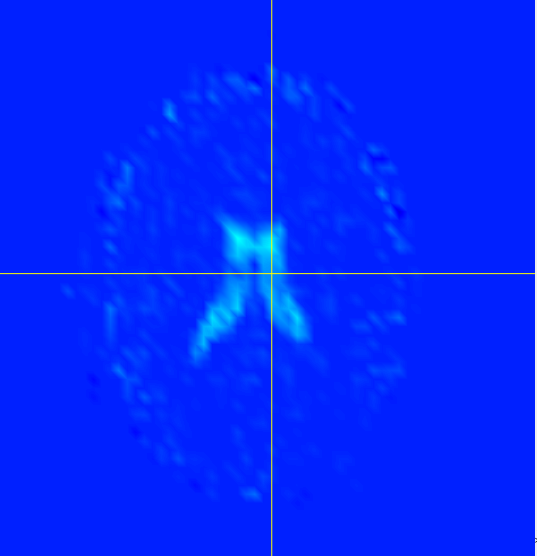}
\caption{The results of SPCP on fMRI brain scan data. Activity is measured in Blood-oxygen-level dependent (BOLD) signal. (From left to right: original image, low-rank image recovered by SPCP, and sparse image recovered by SPCP.)}
\label{fig:fMRI}
\end{figure}

The datasets used for testing were taken from a study of the human brain's response to uncomfortable stimuli, specifically, the feeling of submerging a hand in hot or cold water. Analyzing these scans to find common neurological responses to these stimuli is difficult due to the enormous amount of error in the data. There is uniformly distributed noise due to constant ancillary physiological activity, and there are also sparsely distributed groups representing neurological structures that should all exhibit the same behavior. The ventricles, for example, are filled with cerebrospinal fluid (CSF), which does not contribute to neurological communication, so they should not be active. All signals observed in the ventricles should be treated as sparsely structured outliers.
SPCP removes the uniform noise and, most remarkably, correctly identifies signals in the brain's ventricles as outliers. In Figure \ref{fig:fMRI}, the largest ventricles are the two structures in the center of the brain. The rightmost image shows that the majority of the noise contained in $S$ is from these ventricles. 

The other two major components of the brain are white and gray matter. The activity we are hoping to observe takes place in the gray matter, so ideally SPCP would remove most signals from the white matter regions. However, the regions of white matter are more difficult to distinguish than the regions of CSF, and SPCP removes about equal amounts of noise from the white matter as it does from the gray. If we let $S_{gm}$ be the gray-matter component of $S$, and define $S_{wm}$ and $S_{csf}$ similarly, Figure \ref{GlobalSig} shows the average BOLD signal in $S_{gm}, S_{wm}$, and $S_{csf}$ for each frame in time. These data were normalized by the average original signal in the corresponding regions.

\begin{figure}[h!]
\includegraphics[width=0.332\linewidth]{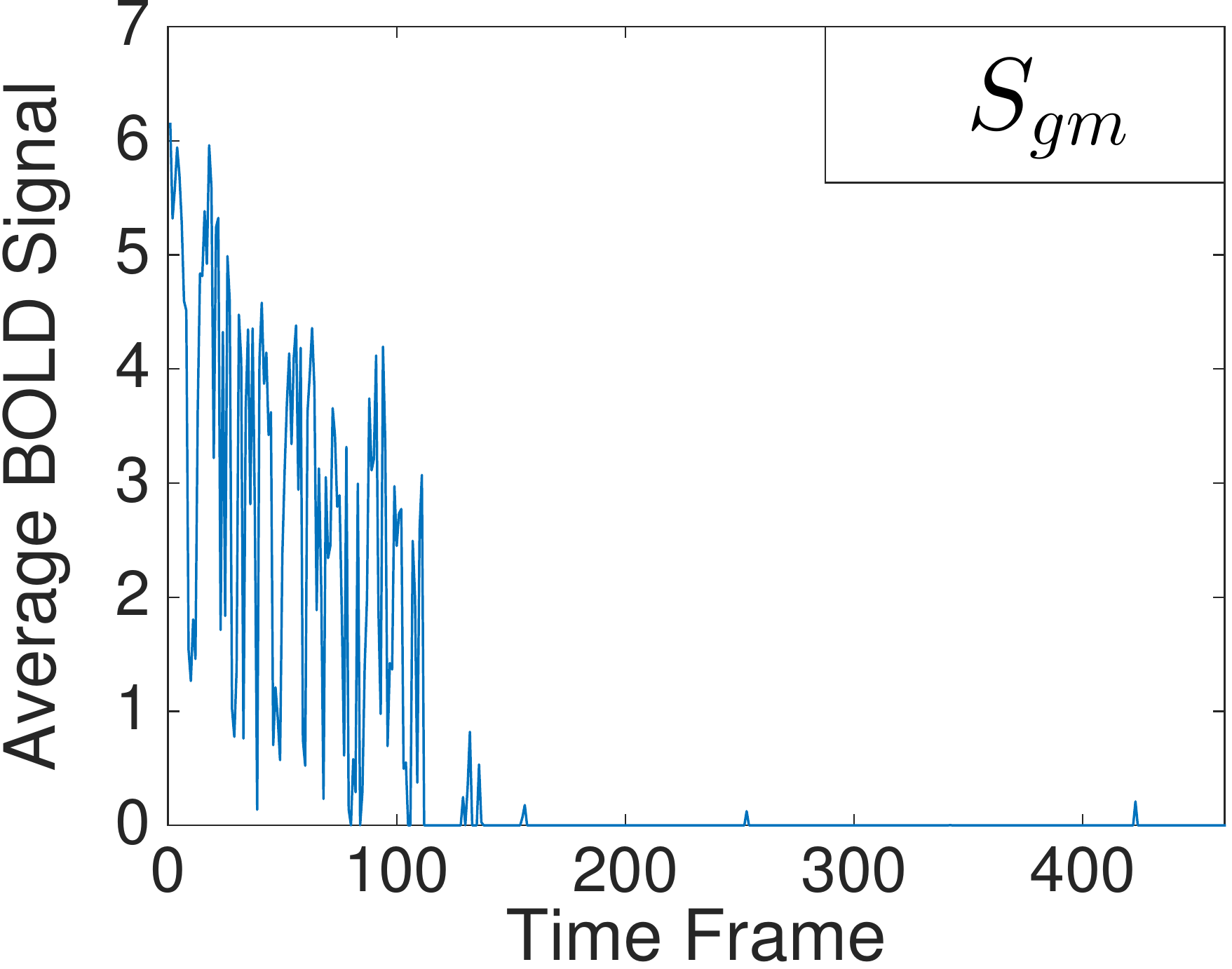}
\includegraphics[width=0.3\linewidth]{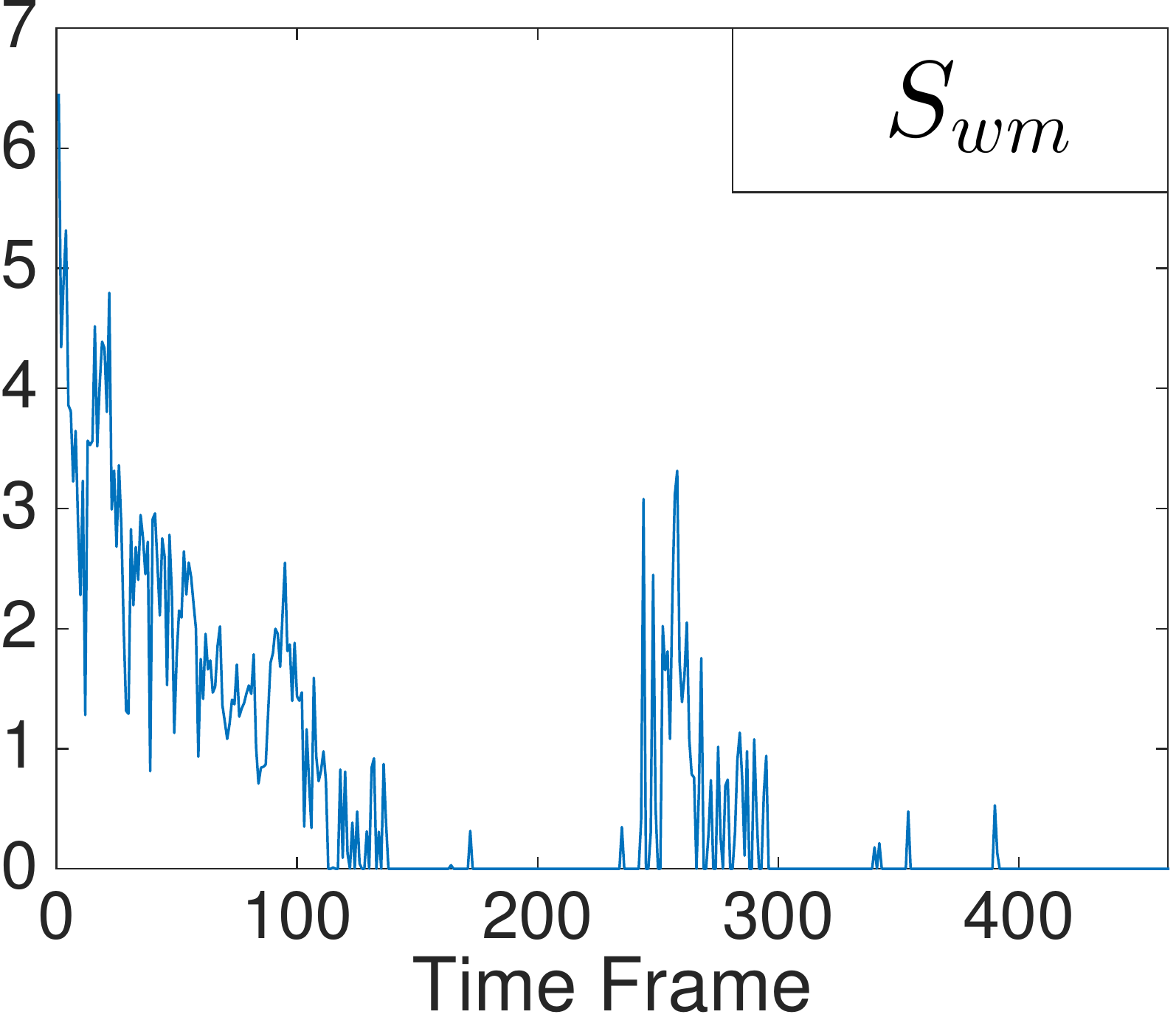}
\includegraphics[width=0.3\linewidth]{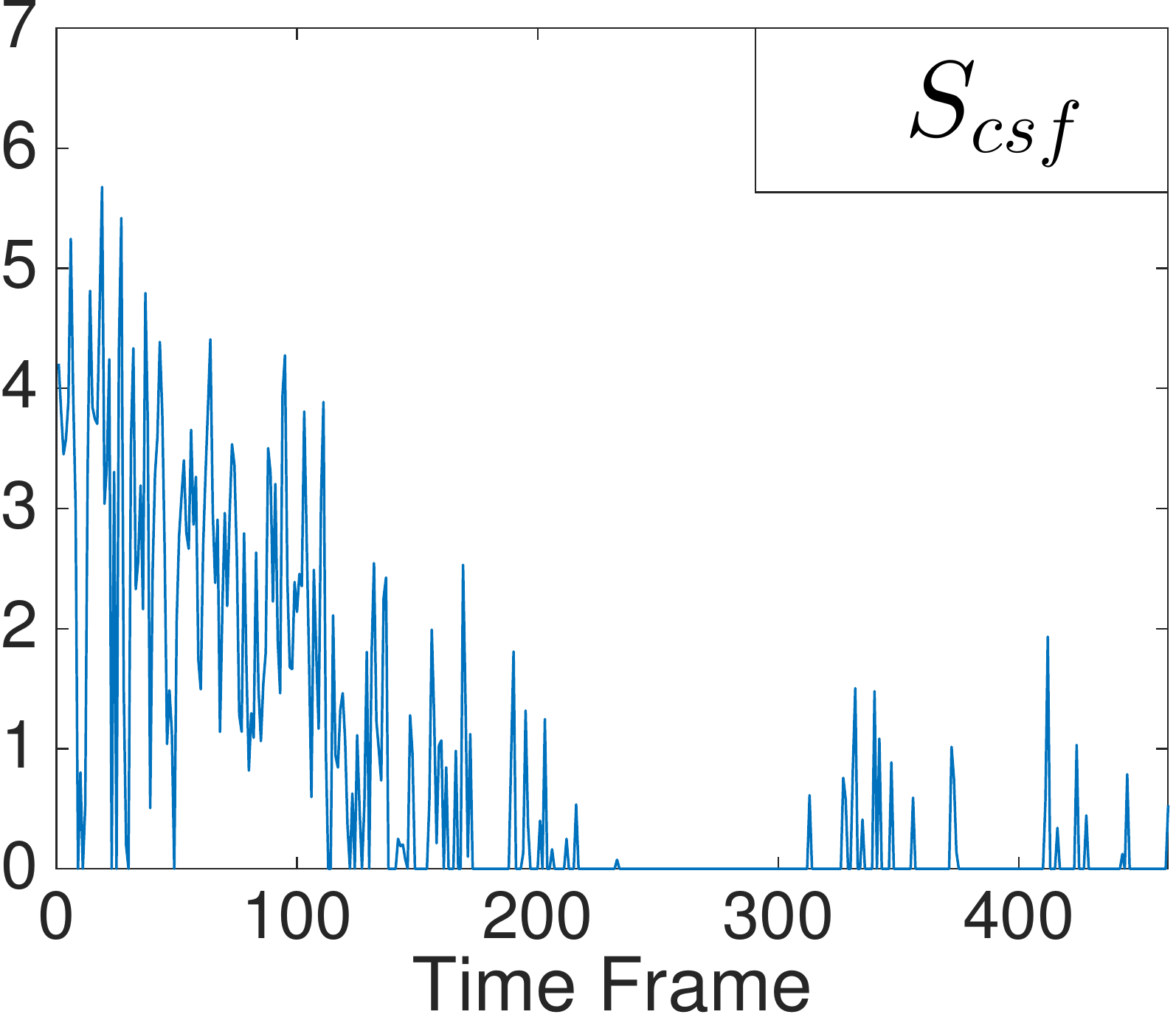}
\caption{The average BOLD signal in different regions of $S$. The averages were normalized by the average original signal in the corresponding regions.}
\label{GlobalSig}
\end{figure}

It is clear that $S_{csf}$ contains more signal than the other two regions. For $S_{gm}$, SPCP detects noise in only the first 100 time slices. The removed signal from the white matter is more distributed over time, and the total amount of noise in $S_{gm}$ and $S_{wm}$ is comparable. These results suggest that SPCP correctly identifies outliers in the fMRI data, especially within the regions of CSF.

Although using TSQR to reduce communication costs provides noticeable speedup, the non-convex approach is faster. In Figure \ref{Compare}, we compare our non-convex Split-SPCP algorithm to a convex solver using TSQR for the rSVD step. Both solvers were decomposing the same $106,\!496 \times 462$ brain-scan data-matrix used in section 6.4, and both were run on the GPU.

\begin{figure}[h!]
\centering
\includegraphics[width=0.7\linewidth]{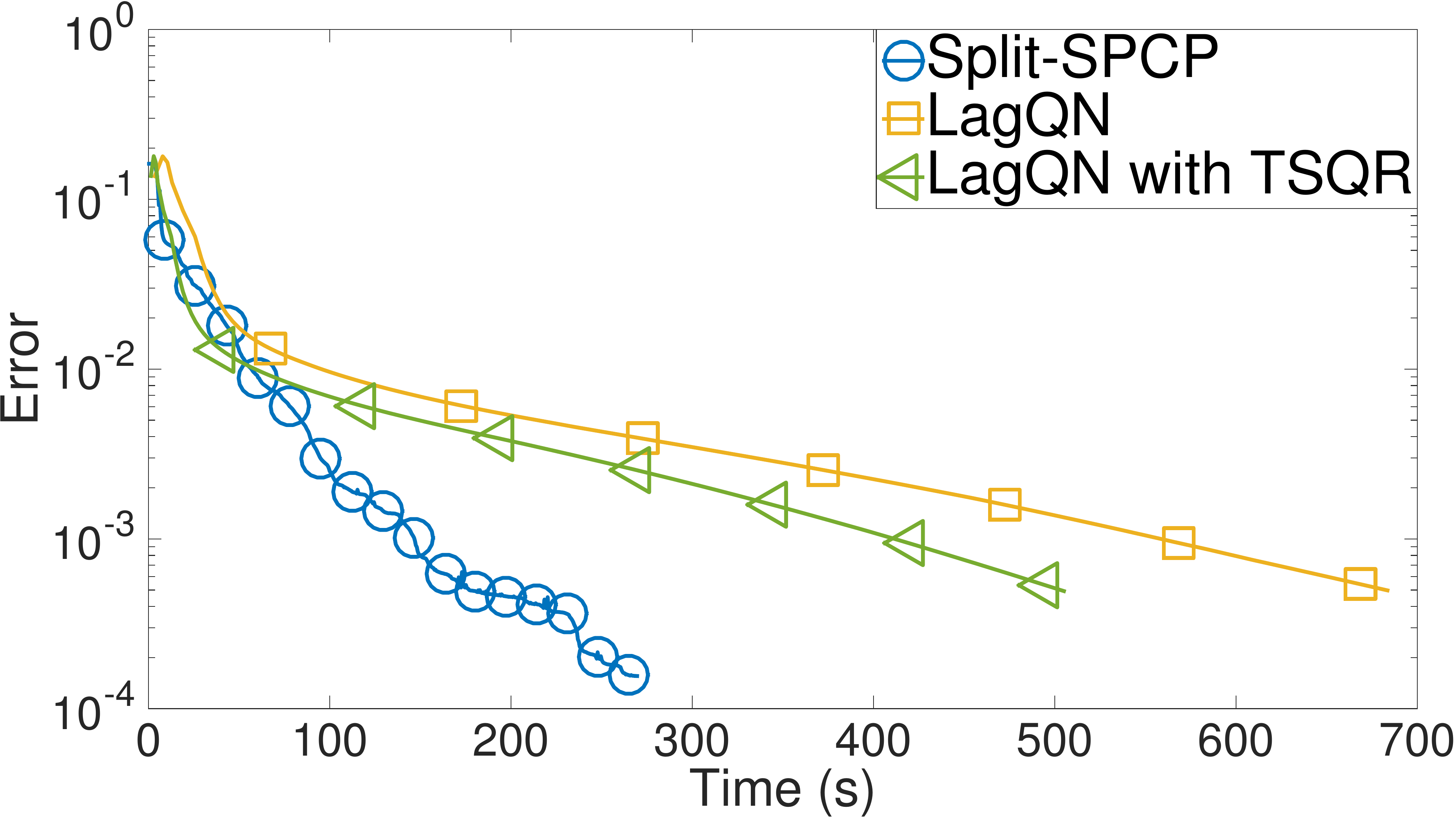}
\caption{Comparing Split-SPCP to LagQN using TSQR on the GPU and without using TSQR. Split-SPCP was initialized with the rank bound $k = 462$, the maximum possible rank of the low-rank component. One marker represents 35 iterations.}
\label{Compare}
\end{figure}

As with previous tests, a reference solution $(L_{ref},S_{ref})$ was found by solving LagQN to high accuracy, and the error is measured as the normalized distance from the reference objective value. The rank of $L_{ref}$ was 423, and Split-SPCP was initialized with the rank bound $k = 462$. Since $X$ had 462 columns, this was the largest possible rank bound, and Split-SPCP still greatly outperformed the convex solvers. Both solvers also found solutions of similar quality. At an error of $5 \times 10^{-4}$, the low-rank component found by LagQN had rank 427, and the low-rank component found by Split-SPCP had rank 425. Similarly, the sparse component of LagQN had sparsity $58.5\%$, and the sparse component of Split-SPCP had sparsity $58.3\%$, while $S_{ref}$ was $58.5\%$ sparse. Split-SPCP recovered nearly the same solution as the parallelized convex algorithm at a much smaller cost.

\subsection{Frank-Wolfe with Marginalization}
\label{AFW}

Our approach to accelerating low-rank recovery models is closely connected to the one presented in \cite{FWforRPCA} to accelerate the Frank-Wolfe method for RPCA. The Frank-Wolfe method, first proposed in \cite{FW}, has recently gained attention for its ability to yield scalable algorithms for optimization problems involving structure-encouraging regularizers \citep{JaggiFrankWolfe, FWforRPCA}. Frank-Wolfe methods optimize a Lipschitz-differentiable function over a convex set by solving a sequence of linear subproblems. For example, if $f$ is Lipschitz-differentiable and $\mathcal{C}$ is a convex set, Frank-Wolfe solves the problem
\begin{equation}
\min_{x \in \mathcal{C}} \quad f(x)
\label{eq:FW}
\end{equation}
by linearizing $f$ about each iteration $x_k$: $f(y) \approx f(x_k) + \big< \nabla f(x), y-x\big>$, minimizing the linear problem to find a decent direction $y_k-x_k$, and then stepping in this direction with a step size $\eta$: $x_{k+1} = x_k + \eta (y_k - x_k)$. Traditionally, $\eta = \frac{2}{k+2}$ is fixed for convergence guarantees, but better performance can be achieved with adjustable step sizes \cite{FWforRPCA}.

For optimization problems involving the nuclear norm, the set $\mathcal{C}$ is a scaled nuclear norm ball, and the linear subproblem becomes a search for the leading singular tuple of $L_k$. This is much cheaper than computing the full SVD at each iteration, which is required by proximal methods.

In contrast to nuclear norm minimization problems, it is often difficult to scale the Frank-Wolfe method for problems involving $\ell_1$ regularization. For these problems, each Frank-Wolfe iteration updates iterate $S_k$ with only a single non-zero element \citep{FWforRPCA}. This makes Frank-Wolfe infeasible when the sparse component of the data-matrix has several thousand non-zero elements or more. The authors of \cite{FWforRPCA} adapt the Frank-Wolfe method to solve SPCP, using the traditional Frank-Wolfe update for the low-rank component and using a projected gradient update for the sparse component. Their adaptations allow for significant speedup and better convergence compared to the traditional Frank-Wolfe.

The techniques used in \cite{FWforRPCA} can be seen as a special case of the marginalization we present in this work. With the sparsely structured component marginalized, the program depends only upon the low-rank component, so the benefits of the Frank-Wolfe scheme are preserved and its drawbacks are negated. Algorithm \ref{Al:FWours} shows how marginalization can be used to extend the adaptations presented in Algorithms 5 and 6 of \cite{FWforRPCA}. Algorithm \ref{Al:FWours} solves the following program, which is equivalent to SPCP:
\begin{equation}
\min_{\|L\|_* \le t} \quad \lambda_L t + \min_S \frac{1}{2} \| \mathcal{P}_{\Omega} (L+S-X) \|_F^2 + \lambda_S \|S\|_1.
\end{equation}
With the $S$ variable marginalized, it does not affect the linear subproblems arising in the Frank-Wolfe scheme, making this approach much more scalable. Each iteration of Algorithm \ref{Al:FWours} updates the low-rank component and the nuclear norm bound, $t$, while the sparse component remains only implicitly defined. More details on this approach can be found in \cite{FWforRPCA}. 

While Algorithm \ref{Al:FWours} is similar to the methodology in \cite{FWforRPCA}, fully marginalizing the sparse component eliminates the linear subproblem required in \cite{FWforRPCA} to update the iterate $S_k$. We show the performance of this adapted Frank-Wolfe scheme on SPCP in Figure \ref{FWspcp}. Although its performance is better than the traditional Frank-Wolfe (see \cite{FWforRPCA}), it is still much slower than solving Split-SPCP with L-BFGS.

\begin{figure}[h!]
 \includegraphics[width=0.5\linewidth]{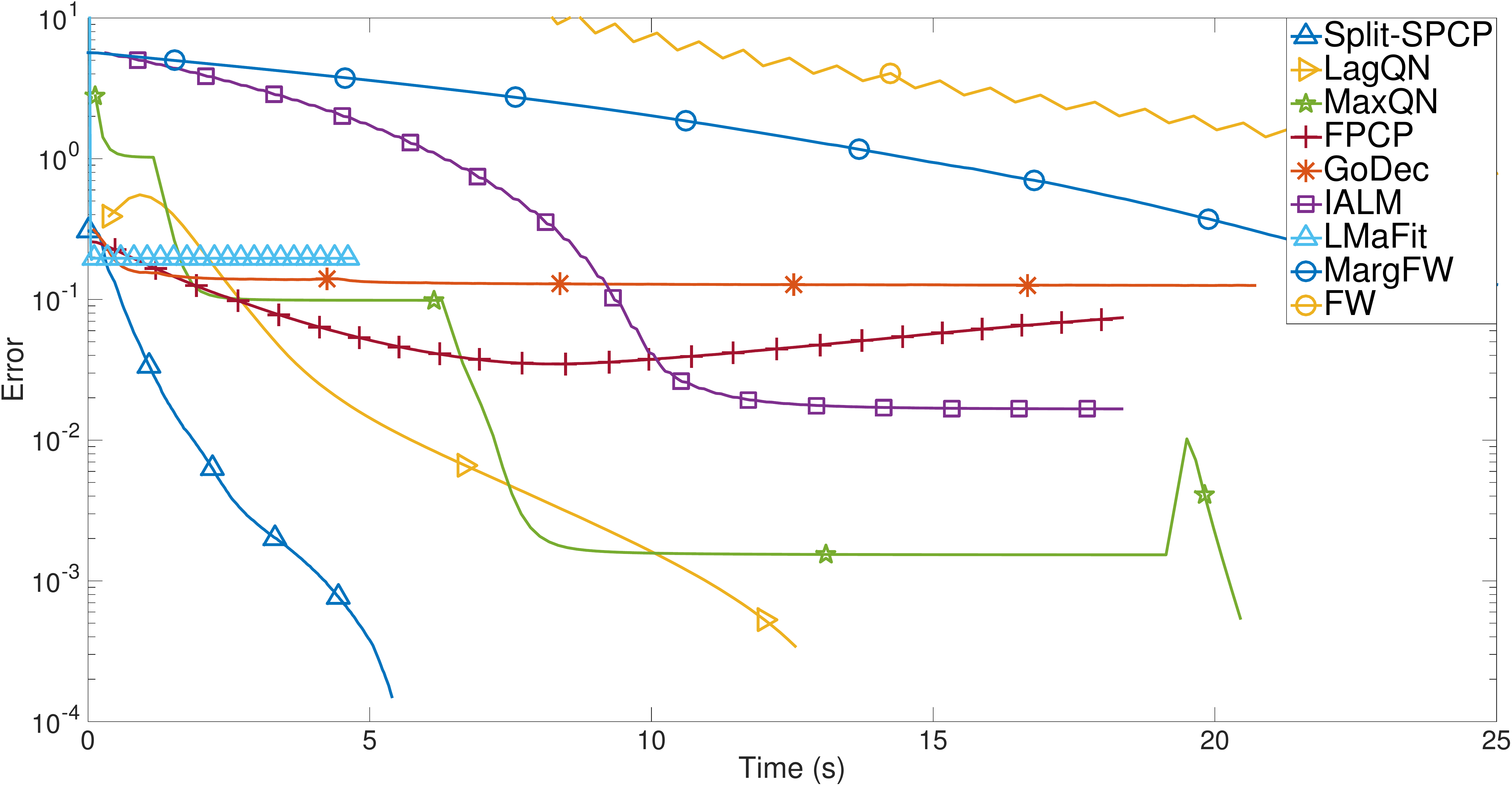}
\caption{Comparing Frank-Wolfe with marginalization (denoted ``MargFW''), as well as another Frank-Wolfe scheme adapted for RPCA \cite{FWforRPCA} (denoted ``FW''), with other SPCP solvers for background subtraction on the escalator video of \cite{LiData}.  One marker corresponds to 40 iterations.}
\label{FWspcp}
\end{figure}

\begin{algorithm}
\DontPrintSemicolon
\caption{Frank-Wolfe Method with Marginalization}
\label{Al:FWours}
\SetKwInOut{Input}{Input}
\SetKwInOut{Output}{Output}
\Input{$\mathbf{L_0}, \lambda_L, \lambda_S, t_0 = 0, U_0 = \frac{1}{2 \lambda_L} \|\mathcal{P}_{\Omega} (\mathbf{L_0}+\mathbf{S_0}-\mathbf{X}) \|_F^2$}
\While{\textnormal{Not Converged}}{
	$\mathbf{S_k} \leftarrow \textnormal{shrink}(\mathbf{X} - \mathbf{L_k}, \lambda_S)$\;
	$\mathbf{Y_k} \leftarrow \argmin\limits_{\|\mathbf{Y}\|_* \le 1} \big< \mathcal{P}_{\Omega} (\mathbf{L_k} + \mathbf{S_k} - \mathbf{X}), \mathbf{Y} \big>$ \;
    \If{$\lambda_L \ge -\big< \mathcal{P}_{\Omega} (\mathbf{L_k} + \mathbf{S_k} - \mathbf{X}), \mathbf{Y_k} \big>$}{
    	$\mathbf{V_k} \leftarrow \mathbf{0}$ \;
        $V_{t_k} \leftarrow 0$ \;}
    \Else{
    	$\mathbf{V_k} \leftarrow U_k \mathbf{Y_k}$ \;
        $V_{t_k} \leftarrow U_k$ \;}
    $\eta^{\star} \leftarrow \argmin\limits_{\eta \in (0,1)} \quad \frac{1}{2} \| \mathcal{P}_{\Omega}( (1-\eta) \mathbf{L_k} + \eta \mathbf{V_k}  + \mathbf{S_k} - \mathbf{X})\|_F^2 + (1-\eta) \lambda_L t_k + \eta \lambda_L t_k$ \;
    $\mathbf{L_{k+1}} \leftarrow (1-\eta^{\star}) \mathbf{L_k} + \eta^{\star} \mathbf{V_k}$ \;
    $t_{k+1} \leftarrow (1-\eta^{\star}) t_k + \eta^{\star} V_{t_k}$ \;
    $U_{k+1} \leftarrow \frac{1}{2 \lambda_L} \|\mathcal{P}_{\Omega} (\mathbf{L_0}+\mathbf{S_0}-\mathbf{X}) \|_F^2 + t_k$
    }
\end{algorithm}

\section{Conclusion}
\label{Conc}

In this manuscript, we present a general framework for accelerating regularized low-rank recovery models on parallel architectures. Factorizing the low-rank component $L = UV^T$ and optimizing over the factors $U$ and $V$ induces a low-rank constraint on $L$ while eliminating the iterative SVDs often required for nuclear norm minimization. Although this factorization sacrifices the convexity of the program, we develop a certificate to determine whether our model has converged to a minimum or to a spurious stationary point, 
showing empirically and theoretically that spurious stationary points do not pose a serious problem for our non-convex model. We also show that marginalizing the regularized $S$ variable creates an objective that is Lipschitz-differentiable when the loss function is strongly convex. The smooth objective allows the use of first-order solvers, which exhibit faster convergence and better parallelization.

We demonstrate the effectiveness of our framework on SPCP, showing that Split-SPCP runs an order of magnitude faster on the GPU than existing solvers. There are other regularized low-rank recovery processes that decrease the cost of the SVD step for improved efficiency, including communication-avoiding and Frank-Wolfe-type algorithms. To compare to Split-SPCP, we propose an improved algorithm in both classes, and show that our SVD-free approach offers greater speedup.

With the acceleration our framework provides, regularized low-rank recovery models achieve new applications. 
Split-SPCP can process video streams in real-time and decompose extremely large data sets, 
such as fMRI brain scans, whose analysis with low-rank recovery models would otherwise be infeasible.

\section{Acknowledgments}

We would like to thank Tor Wager and Stephan Geuter from the Neuroscience department of the University of Colorado-Boulder for the fMRI data and advice.

\bibliographystyle{abbrv}
\bibliography{thesisBecker}

\clearpage

\section*{Appendix A: Factorization Theorem Details}

Both problems share the same set of local minimizers. This was first observed
in the case of SDP in \cite{BurerMonteiro2005} and later generalized in \cite{Aravkin2014}.

Our variant of the theorem:
\begin{theorem}
Consider an optimization problem of the following form
\begin{equation} \label{eq:A1}
\min_{\X \succeq 0} \quad f(\X), \quad \textrm{such that}\quad \rank(\X)\le \rankVar
\end{equation}
where  $\X \in \mathbb{R}^{\dimVarX}$ is a positive semidefinite real matrix, and $f$ is a lower semi-continuous (lsc) function mapping to $[-\infty,\infty]$ and has a non-empty domain over the positive semi-definite matrices. 
Using the change of variable $\X = \P\P^T$, take $\P \in \mathbb{R}^{\dimVar \times \rankVar}$, and consider the problem   
\begin{equation} \label{eq:A2}
\min_{\P} \quad g(\P) \defeq f(\P\P^T)
\end{equation}
Let $\bar \X = \bar \P \bar \P^T$, where $\bar \X$ is feasible for~\eqref{eq:A1}. Then $\bar \X$ is a local minimizer of~\eqref{eq:A1} if and only if $\bar \P$ is a local minimizer of~\eqref{eq:A2}.
\end{theorem}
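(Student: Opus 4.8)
The plan is to prove the two implications separately, exploiting the fact that the factored problem \eqref{eq:A2} is unconstrained: every $\P \in \R^{\dimVar \times \rankVar}$ yields $\P\P^T \succeq 0$ with $\rank(\P\P^T) \le \rankVar$, so feasibility for \eqref{eq:A1} is automatic, and the only content is comparing objective values across the continuous map $\Psi : \P \mapsto \P\P^T$.

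First I would handle the direction ``$\bar\X$ local min $\Rightarrow$ $\bar\P$ local min,'' which is the routine one. Since $\Psi$ is continuous, for $\P$ in a small enough ball around $\bar\P$ the image $\X = \P\P^T$ lies in any prescribed ball around $\bar\X$; as $\X$ is automatically feasible, local optimality of $\bar\X$ gives $g(\P) = f(\P\P^T) \ge f(\bar\X) = g(\bar\P)$, so $\bar\P$ is a local minimizer of $g$. The substantive direction is the converse, ``$\bar\P$ local min $\Rightarrow$ $\bar\X$ local min.'' Here the difficulty is that to use local optimality of $\bar\P$ I must, given a feasible $\X$ near $\bar\X$, produce a factor $\P$ with $\P\P^T = \X$ that is \emph{close} to $\bar\P$ — but the factorization is defined only up to the right action of the orthogonal group $O(\rankVar)$, and $\bar\P$ may be rank-deficient when $\rank(\bar\X) < \rankVar$, so no naive Lipschitz inverse of $\Psi$ is available. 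The plan is to isolate exactly what is needed as a lemma: for every $\epsilon > 0$ there is $\delta > 0$ such that every feasible $\X$ with $\|\X - \bar\X\|_F < \delta$ admits a factorization $\X = \P\P^T$ with $\|\P - \bar\P\|_F < \epsilon$. Granting the lemma, the argument closes at once, since such a $\P$ gives $f(\X) = g(\P) \ge g(\bar\P) = f(\bar\X)$.

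The hard part will be this factorization-selection lemma, and I would prove it by contradiction using compactness together with the orbit structure of factorizations. The key algebraic fact is that $AA^T = BB^T$ for $A, B \in \R^{\dimVar \times \rankVar}$ if and only if $B = AQ$ for some $Q \in O(\rankVar)$, so all factors of a fixed matrix form a single $O(\rankVar)$-orbit. If the lemma failed, there would be $\epsilon_0 > 0$ and feasible $\X_j \to \bar\X$ every factor of which stays at distance $\ge \epsilon_0$ from $\bar\P$. Taking any factors $\tilde\P_j$ (for instance from the eigendecomposition, padded with zero columns to width $\rankVar$), the identity $\|\tilde\P_j\|_F^2 = \trace(\X_j) \to \trace(\bar\X)$ keeps them in a compact set, so a subsequence converges to some $\P_\star$ with $\P_\star \P_\star^T = \bar\X$. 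The orbit fact then gives $\P_\star = \bar\P Q$ for some $Q \in O(\rankVar)$, and the rotated factors $\tilde\P_j Q^{-1}$ — still valid factorizations of $\X_j$ since $Q^{-1}Q^{-T} = I$ — converge to $\bar\P$, contradicting the distance bound. This compactness-plus-orbit route sidesteps the rank-deficiency obstruction entirely, because it asks only for continuity of a factorization selection rather than a quantitative Hölder or Lipschitz inverse of $\Psi$.
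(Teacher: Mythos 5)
Your proposal is correct and follows essentially the same route as the paper's proof: the easy direction via continuity of $\P \mapsto \P\P^T$, and the hard direction via boundedness of factors ($\|\P\|_F^2 = \trace(\X)$), Bolzano--Weierstrass, the $O(\rankVar)$-orbit fact $AA^T=BB^T \Leftrightarrow B=AQ$, and rotating the subsequential limit back onto $\bar\P$. The only difference is organizational --- you package the compactness-plus-rotation step as a standalone factorization-selection lemma, while the paper inlines the identical argument in a contrapositive.
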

\begin{proof}
We follow ideas from both \cite{BurerMonteiro2005} and \cite{Aravkin2014}. From Lemma 2.1 in \cite{BurerMonteiro2005}, if both $\P$ and $K$ are $\dimVar \times \rankVar$ matrices, then $\P\P^T = KK^T$ if and only if $\P=QK$ for some orthogonal matrix $Q\in\R^{\rankVar \times \rankVar}$. The objective in \eqref{eq:A2} depends only on $\P\P^T$, so it is clear that $\P$ is a local minimizer of \eqref{eq:A2} if and only if $\P Q$ is a local minimizer for all orthogonal $Q$.

Note that $g$ defined by $\P \mapsto f(\P\P^T)$ is also lsc since it is the composition of $f$ with the continuous function $\P \mapsto \P\P^T$. We require the functions to be lsc so that the notion of ``local minimizer'' is well-defined.

Suppose $\bar \X$ is a local minimizer of~\eqref{eq:A1}, i.e., for some $\epsilon>0$ there is no better feasible point, and factor it as $\bar X = \bar \P \bar \P^T$ for some $\dimVar \times \rankVar$ matrix $\bar \P$. Then we claim $\bar \P$ is a local minimizer of \eqref{eq:A2}. If it is not, then there exists a sequence $\P_n \rightarrow \bar \P$ with $g(\P_n) < g(\bar P)$ for all $n$. By continuity of the map  $\P \mapsto \P\P^T$, there is some $n$ large enough such that $\X_n \defeq \P_n\P_n^T$ is within $\epsilon$ of $\bar \X$, with $f(\X_n)<f(\bar \X)$, contradicting the local optimality of $\bar \X$.

We prove the other direction using contrapositive. Suppose $\bar \X$ is not a local minimizer of \eqref{eq:A1}, so there is a sequence $\X_n \rightarrow \bar \X$ with $f(\X_n) < f(\bar \X)$. Factor each $\X_n$ as $\X_n = \P_n \P_n^T$, and observe that it is not true that $\P_n$ converges. However, $(\X_n)$ converges and hence is bounded, thus $(\P_n)$ is bounded as well (for the spectral norm, $\|\X_n\|=\|\P_n\|^2$, and over the space of matrices, all norms are equivalent). Since $\P_n$ are in a finite dimensional space, so the Bolzano-Weierstrass theorem guarantees that there is a sub-sequence of $(\P_n)$ that converges. Let the limit of the sub-sequence be $\bar \P = \lim_{k\rightarrow \infty} \P_{n_k}$, and note 
 $\bar \P \bar\P^T = \bar\X$ since $\X_{n_k} \rightarrow \bar X$.
 Then $g(\P_{n_k}) = f(\X_{n_k}) < f(\bar \X) = g(\bar \P)$, so $\bar \P$ is not a local solution. It also follows from the first paragraph of the proof that there can not be another local solution $\tilde \P$ that also satisfies $\tilde \P \tilde \P^T = \bar \X$.
\end{proof}
\begin{remark}
We recover the settings of both \cite[Thm.\ 4.1]{Aravkin2014} and \cite{BurerMonteiro2005}, since allowing $f$ to be extended valued and lsc encompasses constraints.
\end{remark}
\begin{remark}
\cite[Cor.\ 4.2]{Aravkin2014} can be clarified to state that the two problems listed there are ``equivalent'' in the sense that they share the same local minimizers (as well as global), using similar continuity arguments on the mapping $\mathcal{R}$ and its adjoint. Furthermore, since the constraints are compact, solutions exist in both formulations.
\end{remark}

\end{document}